\tikzset{curve/.style={settings={#1},to path={(\tikztostart)
    .. controls ($(\tikztostart)!\pv{pos}!(\tikztotarget)!\pv{height}!270:(\tikztotarget)$)
    and ($(\tikztostart)!1-\pv{pos}!(\tikztotarget)!\pv{height}!270:(\tikztotarget)$)
    .. (\tikztotarget)\tikztonodes}},
    settings/.code={\tikzset{quiver/.cd,#1}
        \def\pv##1{\pgfkeysvalueof{/tikz/quiver/##1}}},
    quiver/.cd,pos/.initial=0.35,height/.initial=0}
\tikzset{tail reversed/.code={\pgfsetarrowsstart{tikzcd to}}}
\tikzset{2tail/.code={\pgfsetarrowsstart{Implies[reversed]}}}
\tikzset{2tail reversed/.code={\pgfsetarrowsstart{Implies}}}
\tikzset{no body/.style={/tikz/dash pattern=on 0 off 1mm}}
\newcounter{cnt1}
\newcounter{cnt2}
\newcounter{cnt3}
\newcommand{\blr}{\begin{list}{$($\roman{cnt1}$)$} {\usecounter{cnt1}
        \setlength{\topsep}{0pt} \setlength{\itemsep}{0pt}}}
\newcommand{\bla}{\begin{list}{$($\alph{cnt2}$)$} {\usecounter{cnt2}
        \setlength{\topsep}{0pt} \setlength{\itemsep}{0pt}}}
\newcommand{\bln}{\begin{list}{$($\arabic{cnt3}$)$} {\usecounter{cnt3}
                \setlength{\topsep}{0pt} \setlength{\itemsep}{0pt}}}
\newcommand{\el}{\end{list}}
\newtheorem{Thm}{Theorem}[section]
\newtheorem{Prop}[Thm]{Proposition}
\newtheorem{Def}[Thm]{Definition}
\newtheorem{Exm}[Thm]{Example}
\newtheorem{Rem}[Thm]{Remark}
\title{}
\author{}
\date{}
\begin{document}
\title{Poisson dialgebras}

\author{Apurba Das}
\address{Department of Mathematics,
Indian Institute of Technology Kharagpur, Kharagpur-721302, West Bengal, India.}
\email{apurbadas348@gmail.com, apurbadas348@maths.iitkgp.ac.in}

\author{Satyendra Kumar Mishra}\footnote{Corresponding author email [A2]: satyamsr10@gmail.com; satyendra.mishra@tcgcrest.org}

\author{Goutam Mukherjee}
\address{TCG Centres for Research and Education in Science and Technology, Institute for Advancing Intelligence, Salt Lake, Kolkata-700091, West Bengal, India.}
\address{Academy of Scientific and Innovative Research (AcSIR), Ghaziabad- 201002, India.}
\email{satyamsr10@gmail.com, goutam.mukherjee@tcgcrest.org, gmukherjee.isi@gmail.com}


\keywords{Poisson algebra, dialgebras, Leibniz algebras, Homotopy algebras.}

\begin{abstract}
The notion of Poisson dialgebras was introduced by Loday. In this article, we propose a new definition with some modifications that is supported by several canonical examples coming from Poisson algebra modules, averaging operators on Poisson algebras, and differential Poisson algebras. We show that a Poisson object in the category of linear maps has an associated Poisson dialgebra structure. Conversely, starting from a Poisson dialgebra we describe a Poisson object in the category of linear maps. These constructions yield a pair of adjoint functors between the category of Poisson objects in the category of linear maps and the category of Poisson dialgebras. There is a Lie $2$-algebra associated with any Leibniz algebra. Here, we first obtain an associative $2$-algebra starting from a dialgebra. Then, for a Poisson dialgebra, we construct a graded space that inherits both a Lie $2$-algebra and an associative $2$-algebra structure. In a particular case of Poisson dialgebras, which we call `reduced Poisson dialgebra', we obtain an associated $2$-term homotopy Poisson algebra (of degree $0$).  
\end{abstract}


\maketitle


\quad \quad 2020 Mathematics Subject Classification: {17B63, 17A32, 17A30, 18N40.}



\noindent

\thispagestyle{empty}



\section{Introduction}
The notion of Leibniz algebras was first introduced by Bloh \cite{Bloh}. A right Leibniz algebra is a vector space $L$ equipped with a bracket $[-,-]$ such that the map $[-,x]$ is a derivation of the bracket $[-,-]$ for each $x\in L$. Leibniz algebras are non-commutative analog of Lie algebras, which appear in different areas such as Poisson geometry, representation theory of Lie algebras, and Nambu-Mechanics.

\smallskip

A dialgebra is a vector space equipped with two associative algebra products satisfying three compatibility relations. Dialgebras were also introduced by Loday \cite{loday}. Dialgebras play a dominant role in the theory of Leibniz algebras generalizing the role of associative algebra in the theory of Lie algebras. The notion of dialgebras is a non-commutative analog of associative algebras, in particular, we have the following diagram.

\[\begin{tikzcd}
	{\text{Lie algebras}} &&&& {\text{Associative algebras}} \\
	\\
	{\text{Leibniz algebras}} &&&& {\text{Dialgebras}}
	\arrow["{\text{non-commutative analog}}"', shorten >=3pt, from=1-1, to=3-1]
	\arrow["{\text{non-commutative analog}}", from=1-5, to=3-5]
	\arrow["{\text{enveloping algebra functor}}", shift left=1, from=1-1, to=1-5]
	\arrow["{\text{$[x,y]=xy-yx$}}", shift left=1, from=1-5, to=1-1]
	\arrow["{\text{eneveloping algebra functor}}", shift left=1, from=3-1, to=3-5]
	\arrow["{\text{$[x,y]=x\dashv y-y\vdash x$}}", shift left=1, from=3-5, to=3-1]
\end{tikzcd}\]

\smallskip

Loday and Pirashvili considered a tensor category of linear maps $\mathcal{LM}$ in order to prove a Milnor-Moore type theorem for Leibniz algebras in \cite{Loday-Pirashvili}. They showed that any Leibniz algebra is a Lie object in the category of linear maps. The universal enveloping algebra functor from the category of Leibniz algebras to the category of associative algebras factors through the category of cocommutative Hopf algebras in $\mathcal{LM}$. Here, we first show that there is an adjunction between the category of associative algebra objects in the category $\mathcal{LM}$ and the category of dialgebras. Thus, the notion of dialgebras is a non-commutative analogue of associative algebras in the sense of \cite{Loday-Pirashvili}.

\smallskip

Poisson algebras are central objects in Poisson geometry. Throughout the article, by Poisson algebra we mean a vector space equipped with a Lie bracket and an associative algebra product such that the Lie bracket is a derivation of the algebra product, we refer to \cite{Akman, YKS, Kubo} for more details. The main goal of this article is to study a non-commutative analog of Poisson algebras (in the sense of \cite{Loday-Pirashvili}) known as \textit{Poisson dialgebra} first introduced by Loday in \cite{loday}. We redefine Poisson dialgebras consisting of a Leibniz algebra and a dialgebra structure satisfying some compatibility relations (see Definition \ref{Pois-di}). We add two conditions on mixed skew-symmetry to the definition of Loday \cite{loday}. In a sequel, we justify the modifications in the definition with several canonical examples coming from dialgebras, Poisson bimodules, averaging operators, and filtered dialgebras. Later on, we establish an adjunction between the category of poisson dialgebras and Poisson algebra objects in $\mathcal{LM}$.


\smallskip

The notion of Lie $2$-algebras was introduced by Baez and Crans as the categorification of Lie algebras \cite{Baez}. They showed that Lie $2$-algebras are equivalent to $2$-term  homotopy Lie algebras ($2$-term $L_\infty$-algebras \cite{Lada}). It has been observed in \cite{Sheng} that Leibniz algebras are closely related to Lie 2-algebras. Homotopy algebras such as homotopy associative algebras, homotopy Lie algebras, and homotopy Poisson algebras have been extensively studied, we refer to \cite{Baez, Lada, HomotopyPoiss, Mehta, PoissManifold1,HigherPoiss1, HigherPoiss2, stas} to mention some of these studies. A $2$-term homotopy Poisson algebra  is a Lie $2$-algebra endowed with an associative multiplication satisfying some compatibilities, which in turn gives a homotopy Poisson algebra of finite type \cite{HomotopyPoiss, Mehta}. We first show that a dialgebra naturally gives rise to an associative $2$-algebra structure. This can be thought of as the dialgebra analogue of the construction given in \cite{Sheng}. Next, we will further generalize the construction for Poisson dialgebras. Given a Poisson dialgebra, we construct a graded space that inherits both a Lie $2$-algebra and an associative $2$-algebra structure. Additionally, if the given Poisson dialgebra is reduced (i.e., a Poisson dialgebra with $x\dashv y= x\vdash y,$ for all $x,y\in P$), then the induced associative $2$-algebra structure becomes associative and satisfies all the compatibilities of a $2$-term homotopy Poisson algebra. 

\smallskip

The organization of the paper is as follows: In Section \ref{sec-2}, we recall the definitions of Poisson algebras, dialgebras, Leibniz algebras, tensor category of linear maps and related results. In Section \ref{sec-3}, we construct an associative $2$-algebra from a dialgebra. We propose a new definition of Poisson dialgebras in Section \ref{sec-4} followed by different natural examples. We show that dialgebras, Poisson algebra bimodules, differential Poisson algebras, and averaging operators on Poisson algebras give a Poisson dialgebra structure. We also define a graded Poisson dialgebra associated to a filtered dialgebra. We describe two adjoint functors between the categories of Poisson dialgebras and Poisson algebra objects in the category $\mathcal{LM}$. The last section is devoted to homotopy structures related to Poisson dialgebras. For a Poisson dialgebra $(P,\dashv,\vdash,[-,-])$, we define a graded space $P\oplus J$ that simultaneously carries a Lie $2$-algebra and an associative $2$-algebra structure. In the particular case of a reduced Poisson dialgebra, we obtain a $2$-term homotopy Poisson algebra (degree $0$) on the graded space $P\oplus J$.

\section{Preliminaries}\label{sec-2}

In this section, we recall some definitions and fix notations. Let $\mathbb{K}$ be a field of characteristic zero. All the linear maps and tensor products are over the field $\mathbb{K}$ unless otherwise stated. 
Let $(A,\mu)$ is an associative algebra and $M$ be an $A$-bimodule. Then, we denote $ab:=\mu(a,b)$ for all $a,b\in A$. We denote the left and right module actions of $A$ on $M$ by $a\cdot m$ and $m\cdot a$, respectively for all $a\in A$ and $m\in M$.

\subsection{Poisson algebras}

A Poisson algebra is a $\mathbb{K}$-vector space $P$ equipped with an associative algebra product $\mu$ and a Lie bracket $[-,-]$ such that the following compatibility relation holds.
\begin{equation}\label{Leibniz rule}
[x, yz]=y[x,z]+[x,y]z, \quad \forall x,y,z\in P.
\end{equation}

A map  $\varphi:P\rightarrow P^\prime$ is called a homomorphism of Poisson algebras if for all $x,y\in P$, 
$$\varphi(xy)=\varphi(x)\varphi(y)\quad\mbox{and}\quad \varphi[x,y]_P=[\varphi(x),\varphi(y)]_{P^\prime}.$$ We denote the category of Poisson algebras by $\mathsf{Pois}$. 

\begin{Def}
Let $(P,\mu,[-,-])$ be a Poisson algebra. Then a $\mathbb{K}$-vector space is called a Poisson-bimodule over $P$ if $M$ is simultaneously an associative algebra bimodule over $(P,\mu)$ and a Lie algebra module over $(P,[-,-])$ such that the compatibility relation \eqref{Leibniz rule} holds whenever two of the entries are in $P$ and one of the entries is in $M$. 
\end{Def}

\subsection{Associative dialgebras}
An associative dialgebra $(D,\dashv, \vdash)$ is a vector space $D$ equipped with two linear maps $\dashv,\vdash:D\otimes D\rightarrow D$ (called the left product and the right product, respectively) such that the following identities hold
\begin{equation}\label{ax1}
(x\dashv y) \dashv z = x\dashv (y\vdash z),
\end{equation}
\vspace{-.8cm}
\begin{equation}\label{ax2}
(x\dashv y) \dashv z = x\dashv (y\dashv z),
\end{equation}
\vspace{-.8cm}
\begin{equation}\label{ax3}
(x\vdash y) \dashv z = x\vdash (y\dashv z),
\end{equation}
\vspace{-.8cm}
\begin{equation}\label{ax4}
(x\dashv y) \vdash z = x\vdash (y\vdash z),
\end{equation}
\vspace{-.8cm}
\begin{equation}\label{ax5}
(x\vdash y) \vdash z = x\vdash (y\vdash z),
\end{equation}
for all $x,y,z\in D$. Note that identities \eqref{ax2} and \eqref{ax5} imply that the left and right products are associative. Thus, a dialgebra is a vector space $D$ with two associative products $\dashv$ and $\vdash$ satisfying 
\begin{equation}\label{ax6}
x\dashv (y \dashv z) = x\dashv (y\vdash z),
\end{equation}
\vspace{-.8cm}
\begin{equation}\label{ax7}
(x\vdash y) \dashv z = x\vdash (y\dashv z),
\end{equation}
\vspace{-.8cm}
\begin{equation}\label{ax8}
(x\dashv y) \vdash z = (x\vdash y)\vdash z),
\end{equation}
for all $x,y,z \in D$. A linear map $f:D\rightarrow D^\prime$ is called a homomorphism of dialgebras if $f(x\dashv y)=f(x)\dashv f(y)$
and $f(x\vdash y)=f(x)\vdash f(y),$
for all $x,y\in D$. We denote the category of associative dialgebras by $\mathsf{Dias}$.

We recall some interesting examples of dialgebras.
\begin{itemize}
\item Let $A$ be an associative algebra, then $(A,\dashv,\vdash)$ is a dialgebra, where $a\dashv b=ab=a\vdash b,~\forall a,b\in A$.

\item Let $A$ be an associative algebra and $M$ be an $A$-bimodule. If $f: M\rightarrow A$ is an $A$-bimodule map, then $(M,\dashv,\vdash)$ is a dialgebra, where the left and right products are given by 
$$m\dashv n=m\cdot f(n),\quad \mbox{and} \quad m\vdash n=f(m)\cdot n,~~~ \forall m,n\in M.$$

\item Let $A$ be an associative algebra and $d:A\rightarrow A$ be a linear map. If $(A,d)$ is a differential associative algebra, i.e., $d(ab)=(da)b+a(db),$ for all $a,b\in A$. Then $(A,\dashv,\vdash)$ is a dialgebra, where the left and right products are given by 
$$a\dashv b=a d(b),\quad \mbox{and} \quad a\vdash b=d(a) b,~~~ \forall a,b\in A.$$

\end{itemize}

\begin{Rem}\label{associativization}
Let $\mathsf{As}$ denotes the category of associative algebras. Any associative algebra is a dialgebra. Thus, we have a functor $\mathsf{inc}: \mathsf{As}\rightarrow \mathsf{Dias}$. For a dialgebra $(D,\dashv,\vdash)$, let us consider the quotient space $D_{As}$ of $D$ by the subspace spanned by the elements $x\dashv y-x\vdash y,~\forall x,y\in D$. Then $D_{As}$ is an associative algebra with the product induced by the products $\dashv,\vdash: D\otimes D\rightarrow D$. It yields a functor $\mathsf{(-)_{As}}:\mathsf{Dias}\rightarrow \mathsf{As}$ called associativization functor. The functor $\mathsf{(-)_{As}}$ is left adjoint to the functor $\mathsf{inc}: \mathsf{As}\rightarrow \mathsf{Dias}$.
\end{Rem}

A $\mathbb{K}$-vector space $M$ is called a left module over a dialgebra $(D, \dashv,\vdash)$ if there are linear maps $\dashv,\vdash : D\otimes M \rightarrow M$ satisfying the equations \eqref{ax1}-\eqref{ax5} with two of the entries in $D$ and one of the entries in $M$. Similarly, we can define the notion of right modules over $(D, \dashv,\vdash)$.

\begin{Def}\label{rep-dialgebra}

A $\mathbb{K}$-vector space $M$ is a bimodule over a dialgebra $(D, \dashv,\vdash)$ if there are linear maps $\dashv, \vdash: M\otimes D\rightarrow M$ and $\dashv, \vdash: D\otimes M\rightarrow M$ such that the equations \eqref{ax1}-\eqref{ax5} hold true with two of the entries in $D$ and one of the entries in $M$.
\end{Def}

\subsection{Leibniz algebras}

A (right) Leibniz algebra $(\mathfrak{g},[-,-])$ is a $\mathbb{K}$-vector space equipped with a linear map $[-,-]:\mathfrak{g}\otimes \mathfrak{g}\rightarrow \mathfrak{g}$ (bracket) satisfying the Leibniz identity
$$[[x,y],z]=[[x,z],y]+[x,[y,z]],\quad \forall x,y,z\in \mathfrak{g}.$$

If we consider the quotient of $\mathfrak{g}$ by the ideal generated by the elements of the form $[x,x],~x\in \mathfrak{g}$, then it gives a Lie algebra $\mathfrak{g}_{Lie}$ associated to the Leibniz algebra $(\mathfrak{g},[-,-])$. 

Let us also recall the following result from \cite{loday}.
\begin{Prop}[\cite{loday}]\label{induced-Leibniz}
 For a dialgebra $(D,\dashv,\vdash)$, the pair $(D,[-,-])$ is a Leibniz algebra with the bracket
$$[x,y]:=x\dashv y-y\vdash x,\quad \forall x,y\in D.$$
\end{Prop}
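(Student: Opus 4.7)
The plan is to verify the right Leibniz identity
$$[[x,y],z] = [[x,z],y] + [x,[y,z]]$$
by direct expansion using the definition $[a,b] = a \dashv b - b \vdash a$ together with the five dialgebra axioms \eqref{ax1}--\eqref{ax5}. Expanding the left-hand side yields four terms and the two summands on the right yield four each, so after moving everything to one side there are twelve terms whose total cancellation must be established.

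My strategy is to bring each of these terms into a common normal form using the axioms and then pair them off. Applying \eqref{ax2} turns $(x\dashv y)\dashv z$ into $x\dashv(y\dashv z)$ and $(x\dashv z)\dashv y$ into $x\dashv(z\dashv y)$; applying \eqref{ax3} in both directions gives $(y\vdash x)\dashv z = y\vdash(x\dashv z)$ and $z\vdash(x\dashv y) = (z\vdash x)\dashv y$; applying \eqref{ax5} in both directions identifies $z\vdash(y\vdash x)$ with $(z\vdash y)\vdash x$; and the derived identities $x\dashv(y\vdash z)=x\dashv(y\dashv z)$ (from \eqref{ax1} and \eqref{ax2}) and $(y\dashv z)\vdash x=(y\vdash z)\vdash x$ (from \eqref{ax4} and \eqref{ax5}) dispose of the two remaining mixed-shape terms arising from the expansion of $[x,[y,z]]$. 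After these rewrites every term appears exactly twice with opposite signs and the identity collapses to $0=0$.

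The main obstacle is purely bookkeeping: with twelve double-product expressions over the three variables $x,y,z$, one has to choose the rewrites carefully so that every surviving term finds its cancelling partner. There is no deeper conceptual ingredient — the proposition simply records the fact that the five defining relations of a dialgebra are precisely what is needed to force the antisymmetrization $x\dashv y - y\vdash x$ to satisfy the Leibniz identity, making this the non-commutative analogue of the classical passage from an associative algebra to its commutator Lie algebra.
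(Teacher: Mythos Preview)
Your proposal is correct and follows the same approach as the paper, which simply states that the verification is straightforward; you have supplied the details of that direct computation, pairing the twelve terms using precisely the axioms \eqref{ax1}--\eqref{ax5} (and their consequences \eqref{ax6}, \eqref{ax8}) as required.
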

\begin{proof}
It is straightforward to check that $(D,[-,-])$ is a Leibniz algebra.
\end{proof}

Furthermore, it easily follows that the above result gives a functor from the category $\mathsf{Dias}$ to the category $\mathsf{Leib}$ of Leibniz algebras.

\subsection{The category of linear maps }
The category $\mathcal{LM}$ of linear maps is defined as follows:
\begin{itemize}
\item The objects are the $\mathbb{K}$-linear maps $f:V\rightarrow W$, where $V$ and $W$ are $\mathbb{K}$-vector spaces.

\item A morphism $(\alpha^\prime,\alpha)$ from $V\xrightarrow{f} W$ and $V^\prime\xrightarrow{f^\prime} W^\prime$ is a commutative diagram in the category of vector spaces, given by
\begin{equation*}\label{d1}
\begin{CD}
 V @>\alpha^\prime >> V^\prime   \\
 @V f VV @VV f^\prime V\\
 W @>>\alpha > W^\prime.
\end{CD}
\end{equation*}

\end{itemize}

For an object $V\xrightarrow{f} W$, by upstairs we mean the vector space $V$ and by downstairs we mean the vector space $W$. If $V\xrightarrow{f} W$ and $V^\prime\xrightarrow{f^\prime} W^\prime$ be two objects in the category $\mathcal{LM}$, then a tensor product of these two objects can be defined as follows:
$$f\otimes_{\mathcal{LM}} f^\prime: V\otimes_{\mathbb{K}} W^\prime \oplus W\otimes_{\mathbb{K}} V^\prime\rightarrow W\otimes_{\mathbb{K}} W^\prime,$$
where 
$$f\otimes_{\mathcal{LM}} f^\prime:=f\otimes_{\mathbb{K}} \mathsf{1}+ \mathsf{1}\otimes_{\mathbb{K}}f^\prime.$$

The tensor product of two morphisms $(\alpha^\prime,\alpha)$ and $(\beta^\prime, \beta)$ is given by 
$$(\alpha^\prime,\alpha)\otimes_{\mathcal{LM}}(\beta^\prime, \beta):= (\alpha^\prime\otimes_{\mathbb{K}}\beta+\alpha \otimes_{\mathbb{K}} \beta^\prime,\alpha \otimes_{\mathbb{K}}\beta).$$

\begin{Def}
An associative algebra object in the category $\mathcal{LM}$ is an object $M\xrightarrow{f} A$ with a morphism 
$$\bar{\mu}=(\mu^\prime,\mu):(M\xrightarrow{f} A)\otimes_{\mathcal{LM}}(M\xrightarrow{f} A)\rightarrow (M\xrightarrow{f} A)$$
such that
\begin{equation}\label{associativitiy-in-LM}
\bar{\mu}(\mathsf{1}\otimes_{\mathcal{LM}}\bar{\mu})=\bar{\mu}(\bar{\mu}\otimes_{\mathcal{LM}}\mathsf{1}).
\end{equation}
\end{Def}

The downstairs of the above condition \eqref{associativitiy-in-LM} implies that $A$ is an associative algebra with the product $\mu$. The upstairs implies that the map $\mu^\prime$ yields an $A$-bimodule structure on $M$. Moreover, since $\bar{\mu}=(\mu^\prime,\mu)$ is a morphism in $\mathcal{LM}$, we have
$$f(\mu^\prime(m,a))=\mu(f(m),a),~\quad\mbox{and } f(\mu^\prime(a,m))=\mu(a,f(m)),\quad \forall m\in M, a\in A.$$

Therefore, we have the following result.

\begin{Prop}
An associative algebra object $M\xrightarrow{f} A$ in the category $\mathcal{LM}$ is equivalent to an $A$-bimodule map $f:M\rightarrow A$, where $A$ is an associative algebra and $M$ is an $A$-bimodule. \qed
\end{Prop}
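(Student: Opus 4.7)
The proposition asserts an equivalence of data, so the plan is to unpack the definition of an associative algebra object in $\mathcal{LM}$ on the two levels (downstairs and upstairs) and then construct the inverse correspondence. I would not aim for a long proof; the content is really a bookkeeping exercise translating the abstract $\mathcal{LM}$-data into concrete module-theoretic data.

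First, suppose $(M\xrightarrow{f} A, \bar{\mu})$ is an associative algebra object with $\bar{\mu}=(\mu',\mu)$. By the definition of the tensor product in $\mathcal{LM}$, $\mu$ is a map $A\otimes A \to A$ and $\mu'$ is a map $M\otimes A \oplus A\otimes M \to M$. I would first look at the downstairs component of the associativity equation $\bar{\mu}(\mathsf{1}\otimes_{\mathcal{LM}}\bar{\mu})=\bar{\mu}(\bar{\mu}\otimes_{\mathcal{LM}}\mathsf{1})$: by the downstairs rule for the tensor product of morphisms this is just $\mu(\mathsf{1}\otimes \mu) = \mu(\mu\otimes \mathsf{1})$, giving that $(A,\mu)$ is an associative algebra. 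Then I would examine the upstairs component, which after splitting $M\otimes A\otimes A \oplus A\otimes M\otimes A \oplus A\otimes A\otimes M$ into three summands yields (respectively) right associativity $\mu'(\mu'(m,a),b)=\mu'(m,\mu(a,b))$, the mixed bimodule compatibility $\mu'(\mu'(a,m),b)=\mu'(a,\mu'(m,b))$, and left associativity $\mu'(\mu'(a,b),m)=\mu'(a,\mu'(b,m))$. Writing $a\cdot m := \mu'(a,m)$ and $m\cdot a := \mu'(m,a)$, these are exactly the $A$-bimodule axioms for $M$. Finally, the condition that $\bar{\mu}=(\mu',\mu)$ is a morphism in $\mathcal{LM}$ means $f\circ \mu' = \mu\circ (f\otimes \mathsf{1}+\mathsf{1}\otimes f)$, which on the two summands of the source reads $f(m\cdot a)=f(m)a$ and $f(a\cdot m)=af(m)$, i.e.\ $f$ is an $A$-bimodule map.

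For the converse direction, given an associative algebra $A$, an $A$-bimodule $M$, and an $A$-bimodule map $f:M\to A$, I would define $\mu:A\otimes A\to A$ by the algebra product and $\mu':M\otimes A\oplus A\otimes M\to M$ by $\mu'(m,a):=m\cdot a$ and $\mu'(a,m):=a\cdot m$. The bimodule map property of $f$ makes $(\mu',\mu)$ a morphism in $\mathcal{LM}$, and the computations above, read in reverse, show that $\bar{\mu}=(\mu',\mu)$ satisfies the associativity axiom in $\mathcal{LM}$. The two constructions are evidently mutually inverse, so they set up the claimed equivalence.

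There is no real obstacle here; the only minor care needed is to correctly decompose the threefold tensor product in $\mathcal{LM}$ so that the upstairs component of associativity splits into exactly the three bimodule-type relations (left action associative, right action associative, left and right actions commute). Once that decomposition is written down, everything is an immediate rewriting of definitions.
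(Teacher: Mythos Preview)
Your proposal is correct and follows exactly the same approach as the paper: the paper's argument (given in the paragraph immediately preceding the proposition, with the proposition itself marked \qed) consists of reading the downstairs of the associativity condition to get that $(A,\mu)$ is associative, the upstairs to get the $A$-bimodule structure on $M$, and the $\mathcal{LM}$-morphism condition on $\bar{\mu}$ to get that $f$ is a bimodule map. Your write-up is in fact more complete, since you spell out the threefold decomposition of the upstairs associativity and include the converse direction explicitly, but the strategy is identical.
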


Let $M\xrightarrow{f} A$ be an associative algebra in the category $\mathcal{LM}$. Then we have a dialgebra structure $(M,\dashv,\vdash)$, where the products $\dashv, \vdash$ are defined by 
\begin{equation}\label{diproducts}
m\dashv n:=m\cdot f(n) \quad \mbox{and}\quad m\vdash n:=f(m)\cdot n,\quad \forall m,n\in M.
\end{equation}

\begin{Prop}
If $M\xrightarrow{f} A$ is an associative algebra object in the category $\mathcal{LM}$. Then, $(M,\dashv,\vdash)$ is a dialgebra and $f: M\rightarrow A$ is a dialgebra homomorphism.
\end{Prop}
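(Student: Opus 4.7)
The plan is to use the previous proposition, which tells us that an associative algebra object $M\xrightarrow{f} A$ in $\mathcal{LM}$ is equivalent to an $A$-bimodule map $f\colon M\to A$ with $A$ an associative algebra and $M$ an $A$-bimodule. Thus I have three facts at my disposal: associativity of the product on $A$, the $A$-bimodule axioms on $M$, and the bimodule intertwining relations
\[
f(a\cdot m) = a\cdot f(m), \qquad f(m\cdot a) = f(m)\cdot a,
\]
for $a\in A$, $m\in M$. These three facts are exactly what one needs to verify the five dialgebra axioms \eqref{ax1}--\eqref{ax5} for $(M,\dashv,\vdash)$.

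I would then check the axioms one at a time by direct unpacking. For instance, for \eqref{ax1},
\[
(x\dashv y)\dashv z = (x\cdot f(y))\cdot f(z) \quad\text{and}\quad x\dashv(y\vdash z) = x\cdot f(f(y)\cdot z) = x\cdot(f(y)\cdot f(z)),
\]
which agree by associativity of the bimodule action. The same pattern works for the other axioms: \eqref{ax2} uses that $f$ is a right $A$-module map to rewrite $f(y\cdot f(z)) = f(y)\cdot f(z)$; \eqref{ax3} is pure bimodule associativity on $M$; \eqref{ax4} uses that $f$ is a right $A$-module map; and \eqref{ax5} uses that $f$ is a left $A$-module map. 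In every case the computation reduces both sides to a common ``fully evaluated'' expression of the form $f(x)\cdot y\cdot f(z)$, $x\cdot f(y)\cdot f(z)$, $f(x)\cdot f(y)\cdot z$, or $x\cdot f(y)\cdot f(z)$, where the grouping no longer matters thanks to associativity.

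Finally, recalling that $A$ carries the dialgebra structure $a\dashv b = ab = a\vdash b$, the homomorphism property reduces to showing $f(x\dashv y) = f(x)\cdot f(y) = f(x\vdash y)$. The first equality follows from right $A$-linearity of $f$ applied to $f(x\cdot f(y))$, and the second from left $A$-linearity applied to $f(f(x)\cdot y)$.

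The verification is entirely mechanical; I do not expect any genuine obstacle. The only point to be careful about is to keep consistent track of which axiom (left bimodule map, right bimodule map, or associativity) is invoked at each step, so that no circular use of the equality $\dashv = \vdash$ (which does not hold in general) sneaks in.
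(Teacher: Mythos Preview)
Your proposal is correct and follows essentially the same route as the paper: both invoke the preceding proposition to reduce to an $A$-bimodule map $f\colon M\to A$, then verify the dialgebra axioms directly from bimodule associativity together with the left/right $A$-linearity of $f$, and finally deduce the homomorphism property from $f(m\cdot f(n))=f(m)f(n)=f(f(m)\cdot n)$. The only cosmetic difference is that the paper phrases the dialgebra check as ``associativity of $\dashv,\vdash$ plus \eqref{ax6}--\eqref{ax8}'' rather than \eqref{ax1}--\eqref{ax5}, which is of course equivalent.
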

\begin{proof}
It is easy to check that the products $\dashv, \vdash$ defined by equation \eqref{diproducts} are associative and satisfy equations \eqref{ax6}-\eqref{ax8}. Therefore, $(M,\dashv, \vdash)$ is a dialgebra.

Moreover, since $f: M\rightarrow A$ is an $A$-bimodule map, we have $f(m\dashv n)=f(m)f(n)$ and $f(m\vdash n)=f(m) f(n)$ for all $m,n\in M$. Thus, $f: M\rightarrow A$ is a dialgebra homomorphism. 
\end{proof}

\begin{Def}

A homomorphism between two associative algebra objects $(M\xrightarrow{f} A)$ and $(M^\prime\xrightarrow{f^\prime } A^\prime)$ (with the products $\bar{\mu}$ and $\bar{\mu}^\prime$, respectively) in $\mathcal{LM}$ is
a pair $\Phi=(\phi^\prime,\phi):(M\xrightarrow{f} A)\rightarrow(M^\prime\xrightarrow{f^\prime } A^\prime)$ such that $$\bar{\mu}^\prime\circ(\Phi\otimes_{\mathcal{LM}}\Phi)=\Phi\circ\bar{\mu}$$ 
and the following diagram is commuative 

\begin{equation}\label{d2}
\begin{CD}
 M @>\phi^\prime >> M^\prime   \\
 @V f VV @VV f^\prime V\\
 A @>>\phi > A^\prime.
\end{CD}
\end{equation}

\end{Def}

Thus, an associative algebra homomorphism between $(M\xrightarrow{f} A)$ and $(M^\prime\xrightarrow{f^\prime } A^\prime)$ is a pair $(\phi^\prime,\phi)$ of a dialgebra homomorphism $\phi^\prime: M\rightarrow M^\prime$ and an associative algebra homomorphism $\phi: A\rightarrow A^\prime$ such that the diagram \eqref{d2} commutes.

Let us denote the category of associative algebra objects in $\mathcal{LM}$ by $\mathsf{As}_{\mathcal{LM}}$. 
Then, we have a functor $\mathsf{F}: \mathsf{As}_{\mathcal{LM}}\rightarrow \mathsf{Dias}$ assigning a dialgebra $M$ for an associative algebra object $M\xrightarrow{f} A$ in $\mathcal{LM}$. On the other hand, we have a functor $\mathsf{G}: \mathsf{Dias}\rightarrow \mathsf{As}_{\mathcal{LM}}$ defined as follows: For a dialgebra $D$, the functor $\mathsf{G}$ sends the dialgebra $D$ to the associative algebra object $D\xrightarrow{p} D_{As}$ where $D_{As}$ is the associative algebra associated to the dialgebra $D$ and $p$ is the projection map.

\begin{Thm}\label{As_LM}
With the above notations, the functor $\mathsf{G}:\mathsf{Dias}\rightarrow \mathsf{As}_{\mathcal{LM}}$ is left adjoint to the functor $\mathsf{F}: \mathsf{As}_{\mathcal{LM}}\rightarrow \mathsf{Dias}$, i.e., there is a bijection
$$\mathrm{Hom}_{\mathsf{As}_{\mathcal{LM}}}(\mathsf{G}(D),M\xrightarrow{f} A)\cong \mathrm{Hom}_{\mathsf{Dias}}(D,\mathsf{F}(M\xrightarrow{f} A)),$$
for all $D\in \mathsf{Dias}$ and $M\xrightarrow{f}A\in \mathsf{As}_{\mathcal{LM}}$ such that the bijection is functorial in $D\in \mathsf{Dias}$ and $M\xrightarrow{f}A\in \mathsf{As}_{\mathcal{LM}}$.
\end{Thm}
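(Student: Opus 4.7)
The plan is to exhibit an explicit natural bijection in both directions and then check naturality.

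\textbf{Forward map.} Given a morphism $\Phi=(\phi',\phi):\mathsf{G}(D)\to (M\xrightarrow{f}A)$ in $\mathsf{As}_{\mathcal{LM}}$, I simply forget the ``downstairs'' data and send $\Phi$ to $\phi':D\to M$. By the previous proposition, the upstairs component of a morphism in $\mathsf{As}_{\mathcal{LM}}$ is a dialgebra homomorphism $\phi':D\to \mathsf{F}(M\xrightarrow{f}A)=M$, so this lands in $\mathrm{Hom}_{\mathsf{Dias}}(D,\mathsf{F}(M\xrightarrow{f}A))$.

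\textbf{Backward map.} Given a dialgebra homomorphism $\phi':D\to M$, I want to produce the ``downstairs'' component $\phi:D_{As}\to A$. The natural candidate is $f\circ\phi':D\to A$, and I need to show this factors through the projection $p:D\to D_{As}$, i.e.\ vanishes on the subspace spanned by $\{x\dashv y-x\vdash y\}$. This is the key computation: using that $\phi'$ is a dialgebra homomorphism and that $f$ is an $A$-bimodule map, I compute
\begin{align*}
f(\phi'(x\dashv y)) &= f(\phi'(x)\dashv \phi'(y)) = f(\phi'(x)\cdot f(\phi'(y))) = f(\phi'(x))\cdot f(\phi'(y)),\\
f(\phi'(x\vdash y)) &= f(\phi'(x)\vdash \phi'(y)) = f(f(\phi'(x))\cdot \phi'(y)) = f(\phi'(x))\cdot f(\phi'(y)),
\end{align*}
so the two agree, yielding a well-defined linear map $\phi:D_{As}\to A$ with $\phi\circ p=f\circ\phi'$. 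Since $p$ is a surjective algebra map onto $D_{As}$ (whose product is induced from $\dashv$, equivalently $\vdash$), a direct check shows $\phi$ is an associative algebra homomorphism. Thus $(\phi',\phi)$ is a morphism in $\mathsf{As}_{\mathcal{LM}}$.

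\textbf{Inverse and naturality.} The two assignments are mutually inverse: starting from $(\phi',\phi)$, passing to $\phi'$, and reconstructing the downstairs component recovers $\phi$ because $p$ is surjective and the commutativity $\phi\circ p=f\circ\phi'$ forces $\phi$ to be the unique map with that property. The reverse composition is manifestly the identity. Naturality in $D$ is immediate from the functoriality of $\mathsf{G}$, and naturality in $(M\xrightarrow{f}A)$ follows because the construction of $\phi$ from $f\circ\phi'$ is clearly natural in the target.

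\textbf{Main obstacle.} The only nontrivial step is the well-definedness of the backward map: verifying that $f\circ\phi'$ annihilates the defining relations of $D_{As}$. This is where the $A$-bimodule property of $f$ (equivalently, the fact that $\bar{\mu}$ is a morphism in $\mathcal{LM}$) is essential, and it is precisely the compatibility that links the ``dialgebra world'' to the ``associative world'' via the associativization functor of Remark \ref{associativization}.
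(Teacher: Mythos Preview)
Your proof is correct and follows essentially the same strategy as the paper's. The only difference is presentational: where the paper invokes the adjunction $(-)_{As}\dashv\mathsf{inc}$ of Remark~\ref{associativization} as a black box to obtain the unique algebra map $\phi:D_{As}\to A$ from the dialgebra map $f\circ\phi':D\to\mathsf{inc}(A)$, you unpack that step explicitly by checking that $f\circ\phi'$ kills $x\dashv y-x\vdash y$. Both arguments use the same underlying idea and the same ingredients (the $A$-bimodule property of $f$ and the surjectivity of $p$).
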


\begin{proof}
Let $M\xrightarrow{f}A$ be an associative algebra object in ${\mathcal{LM}}$. Then, there is an induced dialgebra structure on $M$. Let $\phi^\prime \in \mathrm{Hom}_{\mathsf{Dias}}(D,\mathsf{F}(M\xrightarrow{f} A))$, i.e., we have a dialgebra homomorphism $\phi^\prime: D\rightarrow M$. Let us recall that the associativization functor $(-)_{As}:\mathsf{Dias}\rightarrow \mathsf{As}$ is left adjoint to the functor $\mathsf{inc}: \mathsf{As}\rightarrow \mathsf{Dias}$. Also, $f\circ \phi^\prime: D\rightarrow \mathsf{inc}(A)$ is a dialgebra homomorphism, so  there exists a unique algebra homomorphism $\phi: D_{As}\rightarrow A$ such that the following diagram commutes.
\begin{equation*}
\begin{CD}
 D @>\phi^\prime >> M   \\
 @V p VV @VV f V\\
 D_{As} @>>\phi > A.
\end{CD}
\end{equation*}
It follows that for any morphism $\phi^\prime \in \mathrm{Hom}_{\mathsf{Dias}}(D,\mathsf{F}(M\xrightarrow{f} A))$, there exists a unique map $(\phi^\prime,\phi)\in \mathsf{Hom}_{\mathsf{As}_{\mathcal{LM}}}(\mathsf{G}(D),M\xrightarrow{f} A)$.

Conversely, for any associative algebra morphism $(\phi^\prime,\phi)\in \mathrm{Hom}_{\mathsf{As}_{\mathcal{LM}}}(\mathsf{G}(D),M\xrightarrow{f} A)$, we have a dialgebra homomorphism $\phi\in \mathrm{Hom}_{\mathsf{Dias}}(D,\mathsf{F}(M\xrightarrow{f} A))$. Again, the uniqueness follows from the fact that the associativization functor $\mathsf{(-)_{As}}:\mathsf{Dias}\rightarrow \mathsf{As}$ is left adjoint to the functor $\mathsf{inc}: \mathsf{As}\rightarrow \mathsf{Dias}$. 
Hence, we have a bijection between the sets of homomorphisms
$$\mathrm{Hom}_{\mathsf{As}_{\mathcal{LM}}}(\mathsf{G}(D),M\xrightarrow{f} A)\cong \mathrm{Hom}_{\mathsf{Dias}}(D,\mathsf{F}(M\xrightarrow{f} A)).$$

For every morphism $\alpha:D\rightarrow D^\prime$ in $\mathsf{Dias}$ and $\beta:(M\xrightarrow{f}A)\rightarrow (M^\prime\xrightarrow{f^\prime}A^\prime)$ in $\mathsf{As}_{\mathcal{LM}}$, we have the following commutative diagram:

$$\begin{CD}
\mathrm{Hom}_{\mathsf{As}_{\mathcal{LM}}}(\mathsf{G}(D^{\prime}), M\xrightarrow{f}A) @> \mathsf{G}(\alpha)^* >>
\mathrm{Hom}_{\mathsf{As}_{\mathcal{LM}}}(\mathsf{G}(D), M\xrightarrow{f}A) @> \beta_{*} >> \mathrm{Hom}_{\mathsf{As}_{\mathcal{LM}}}(\mathsf{G}(D), M^{\prime}\xrightarrow{f^{\prime}}A^{\prime})\\
@V\cong VV @V\cong VV @V\cong VV\\
\mathrm{Hom}_{\mathsf{Dias}}(D^{\prime},\mathsf{F} (M\xrightarrow{f}A)) @> \alpha^* >>
\mathrm{Hom}_{\mathsf{Dias}}(D, \mathsf{F}(M\xrightarrow{f}A)) @> \mathsf{F}(\beta)_{*} >> \mathrm{Hom}_{\mathsf{Dias}}(D, \mathsf{F}(M^{\prime}\xrightarrow{f^{\prime}}A^{\prime})),
\end{CD}$$
where $\alpha^*$ and $\beta_*$ are the maps induced by $\alpha$ and $\beta$. Thus, the bijection is functorial in $D\in \mathsf{Dias}$ and $M\xrightarrow{f}A\in \mathsf{As}_{\mathcal{LM}}$.
\end{proof}

We denote the category of Lie objects in $\mathcal{LM}$ by $\mathsf{Lie_{\mathcal{LM}}}$. Then let us recall the following result for Lie algebra objects in the category of linear maps from \cite{Loday-Pirashvili}. 

\begin{Thm}
For a Lie algebra object $M\xrightarrow{f} L$ in $\mathcal{LM}$, we have a Leibniz algebra structure on $M$. On the other hand, for any Leibniz algebra $\mathfrak{g}$ the Liezation $\mathfrak{g}\xrightarrow{p}\mathfrak{g}_{Lie}$ is a Lie algebra object in $\mathcal{LM}$. The two functors $\mathsf{Leib}\longleftrightarrow \mathsf{Lie}_{\mathcal{LM}}$ are adjoint to each-other. 
\end{Thm}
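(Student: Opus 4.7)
The plan is to mimic the proof of Theorem \ref{As_LM} verbatim, with the Liezation adjunction between $\mathsf{Leib}$ and $\mathsf{Lie}$ playing the role of the associativization adjunction between $\mathsf{Dias}$ and $\mathsf{As}$. The first task is to unpack a Lie algebra object $M\xrightarrow{f}L$ in $\mathcal{LM}$: the downstairs component of the bracket morphism $\bar{\mu}=(\mu',\mu)$ makes $L$ a Lie algebra, the upstairs component (a map $M\otimes L\oplus L\otimes M\to M$) encodes a two-sided action of $L$ on $M$, and the requirement that $\bar{\mu}$ be a morphism in $\mathcal{LM}$ forces $f$ to intertwine this action with the adjoint action of $L$ on itself. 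Skew-symmetry and Jacobi for $\bar{\mu}$, interpreted with the braiding on $\otimes_{\mathcal{LM}}$, then identify this data with a Lie module $M$ over $L$ together with an $L$-equivariant map $f:M\to L$.

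With this identification in hand, the induced Leibniz bracket on $M$ would be defined by $[m,n]_M := [m,f(n)]$, where on the right $[-,-]$ denotes the $L$-action on $M$. The Leibniz identity then reduces to the Jacobi-style compatibility of the $L$-action on $M$, combined with the equivariance $f([m,n]_M)=[f(m),f(n)]_L$. Conversely, given a Leibniz algebra $(\mathfrak{g},[-,-])$, I would check that the formula $[x,p(y)]:=[x,y]_{\mathfrak{g}}$ descends through the Liezation (because the generators of the ideal $\langle [y,y]\rangle$ lie in the kernel, by the Leibniz identity), turning $\mathfrak{g}$ into a Lie module over $\mathfrak{g}_{Lie}$ for which $p:\mathfrak{g}\to\mathfrak{g}_{Lie}$ is automatically equivariant. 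These two constructions give the functors $\mathsf{F}:\mathsf{Lie}_{\mathcal{LM}}\to \mathsf{Leib}$ and $\mathsf{G}:\mathsf{Leib}\to \mathsf{Lie}_{\mathcal{LM}}$.

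The adjunction bijection is then formal. A Leibniz homomorphism $\phi':\mathfrak{g}\to M$ composed with $f$ yields a Leibniz map $f\circ\phi':\mathfrak{g}\to L$ into a Lie algebra, which by the universal property of Liezation factors uniquely as $\phi\circ p$ for some Lie homomorphism $\phi:\mathfrak{g}_{Lie}\to L$. The resulting pair $(\phi',\phi)$ is the corresponding morphism $\mathsf{G}(\mathfrak{g})\to (M\xrightarrow{f}L)$ in $\mathsf{Lie}_{\mathcal{LM}}$, and the inverse direction simply reads off the upstairs component of a morphism in $\mathsf{Lie}_{\mathcal{LM}}$. Functoriality of this bijection in $\mathfrak{g}$ and $M\xrightarrow{f}L$ follows from naturality of the Liezation adjunction, via the same commutative diagram used to conclude the proof of Theorem \ref{As_LM}.

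The main obstacle will be the first step: carefully translating skew-symmetry and the Jacobi identity for $\bar{\mu}=(\mu',\mu)$ in $\mathcal{LM}$ into the clean statement that $M$ is a Lie module over $L$ and $f$ is $L$-equivariant. This requires tracking the two summands of $\otimes_{\mathcal{LM}}$ and the signs coming from the braiding. Once this translation is done, the derivation of the Leibniz identity, the converse construction on $\mathfrak{g}\xrightarrow{p}\mathfrak{g}_{Lie}$, and the adjunction bijection are each short and formally parallel to the dialgebra case treated in Theorem \ref{As_LM}.
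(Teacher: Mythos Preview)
Your proposal is correct and follows exactly the approach the paper indicates: the paper's own proof consists solely of the sentence ``The proof is similar to the proof of the Theorem \ref{As_LM}. For more details, see \cite{Loday-Pirashvili},'' and your plan is precisely to transport the argument of Theorem \ref{As_LM} along the Liezation adjunction in place of associativization. In fact you supply considerably more detail than the paper does, including the explicit unpacking of a Lie object in $\mathcal{LM}$ as an $L$-equivariant map $f:M\to L$ from a Lie module, the formula $[m,n]_M=[m,f(n)]$ for the induced Leibniz bracket, and the verification that the right action of $\mathfrak{g}$ on itself descends to an action of $\mathfrak{g}_{Lie}$.
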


\begin{proof}
The proof is similar to the proof of the Theorem \ref{As_LM}. For more details, see \cite{Loday-Pirashvili}.
\end{proof}
 
 \section{An associative $2$-algebra associated to a dialgebra}\label{sec-3}
 
 Let us recall that associative $2$-algebras are categorification of associative algebras. Associative $2$-algebras are equivalent to $2$-term $A_{\infty}$-algebras \cite{stas}. 
 
\begin{Def}
A graded vector space $\mathcal{A}=A_{0}\oplus A_{1}$ is called an associative $2$-algebra, if $\mathcal{A}$ is equipped with linear maps ${\mu_i:\otimes^i\mathcal{A}\rightarrow \mathcal{A}}$  of degrees $i-2$, for $i=1,2,3$ and the following identities hold.
\begin{enumerate}[(i)]
\item $\mu_1\mu_2(x,a)=\mu_2(x,\mu_1(a)),~~\mu_1\mu_2(a,x)=\mu_2(\mu_1(a),x)$,
\item $\mu_2(\mu_1(a),b)=\mu_2(a,\mu_1(b))$,
\item $\mu_2(\mu_2(x,y),z)-\mu_2(x,\mu_2(y,z))=\mu_1\mu_3(x,y,z)$,
\item $\mu_2(\mu_2(a,x),y)-\mu_2(a,\mu_2(x,y))=\mu_3(\mu_1(a),x,y)$,
\item $\mu_2(\mu_2(x,a),y)-\mu_2(x,\mu_2(a,y))=\mu_3(x,\mu_1(a),y)$,
\item $\mu_2(\mu_2(x,y),a)-\mu_2(x,\mu_2(y,a))=\mu_3(x,y,\mu_1(a))$,
\item $\mu_3(\mu_2(x,y),z,w)-\mu_3(x,\mu_2(y,z),w)+\mu_3(x,y,\mu_2(z,w))=\mu_2(\mu_3(x,y,z),w)+\mu_2(x,\mu_3(y,z,w))$,
\end{enumerate}
for all $a,b\in A_0$ and $x,y,z,w\in A_1$. 
\end{Def}
 
Let $(D,\dashv,\vdash)$ be a dialgebra. We consider the following subset of $D$
$$I:=\{x\in D|~~ x\vdash y=0=y\dashv x, \quad \forall y\in D\}.$$
Note that $I$ is an ideal in $D$, i.e., $x\dashv y\in I$ whenever atleast one of $x$ and $y$ is in $I$. Now, we define a graded space $\mathcal{D}:=D_0\oplus D_1$ with $D_0=D$ and $D_1=I$. 
\bigskip

$\blacktriangleright$ First, define a degree $-1$ map $\mu_1:I\rightarrow D$ by the inclusion map.\\
 
$\blacktriangleright$ Also, consider the map $\mu_2:\otimes^2 D\rightarrow D$ defined by 
\begin{equation}\label{mu2}
\mu_2(x,y)=\frac{1}{2}(x\dashv y+ x\vdash y), \quad \forall x,y\in D.
\end{equation}
Then, $\mu_2(x,y)\in I$ if either of the two elements $x$ and $y$ is in $I$. More preciesely, for $x\in I$ and $y,w\in D$, we have
\begin{align*}
\mu_2(x,y)\vdash w&=\frac{1}{2}(x\dashv y+ x\vdash y)\vdash w =(x\vdash y)\vdash w=0.\quad (\mbox{since } x\in I)\\
w\dashv \mu_2(x,y)&=\frac{1}{2}\big(w\dashv(x\dashv y+ x\vdash y)\big)=w\dashv(x\vdash y)=0. \quad (\mbox{since } x\in I)
\end{align*}
So, $\mu_2(x,y)\in I,$ for all $x\in I$ and $y\in D$. Similarly, for all $x\in D$ and $y\in I$, we have $\mu_2(x,y)\in D$. In turn, it follows that $\mu_2$ induces a degre $0$ map $\mu_2:\mathcal{D}\otimes \mathcal{D}\rightarrow \mathcal{D}$.

\bigskip

$\blacktriangleright$ Finally, we consider the map $\mu_3:\otimes^3 D\rightarrow D$ defined by 
\begin{equation}\label{mu3}
\mu_3(x,y,z)=\mu_2(\mu_2(x,y),z)-\mu_2(x,\mu_2(y,z)), \quad  \forall x,y,z\in D.
\end{equation}
Here, $\mu_3(x,y,z)$ is the associater with respect to the map $\mu_2$. It follows from \eqref{mu2}-\eqref{mu3} that 
\begin{equation}\label{mu3-reduced}
\mu_3(x,y,z)=\frac{1}{4}\big((x\dashv y)\vdash z -x\dashv (y\vdash z)\big), \quad  \forall x,y,z\in D.
\end{equation}

Let us observe that 
 $$w\dashv \mu_3(x,y,z)=0= \mu_3(x,y,z)\vdash w,\quad  \forall x,y,z,w\in D.$$ 
Therefore, $\mu_3(x,y,z)\in I$ for any $x,y,z\in D$.

\begin{Prop}\label{As-2-algebra}
Let $(D,\dashv,\vdash)$ be a dialgebra. Then $(\mathcal{D}=D\oplus I,\mu_1,\mu_2,\mu_3)$ is an associative $2$-algebra, where $\mu_1:I \rightarrow D$ is the inclusion map and the maps $\mu_2$, $\mu_3$ are given by equations \eqref{mu2} and \eqref{mu3-reduced}.
\end{Prop}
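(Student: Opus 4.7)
The plan is to verify the seven axioms (i)--(vii) for the triple $(\mu_1, \mu_2, \mu_3)$, exploiting two structural observations: $\mu_1$ is the \emph{injective} inclusion $I \hookrightarrow D$, and $\mu_3$ is by construction the associator of the bilinear operation $\mu_2$ on $D$ (already shown in the excerpt to land in $I$ via the dialgebra axioms \eqref{ax6} and \eqref{ax8}). Axioms (i) and (iii) become immediate: (i) records that the formula $\tfrac{1}{2}(u\dashv v + u\vdash v)$ defining $\mu_2$ does not care whether one of its arguments is viewed in $I$ or in $D$, while (iii) is the very defining equation \eqref{mu3} of $\mu_3$ composed with the inclusion $\mu_1$. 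For (ii), both $\mu_2(\mu_1(a),b)$ and $\mu_2(a,\mu_1(b))$ vanish when $a,b \in I$, since $a\dashv b = 0$ (as $b \in I$) and $a\vdash b = 0$ (as $a \in I$).

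For axioms (iv)--(vi) I would give a uniform reduction to (iii). Each is an equation in $A_1 = I$, so applying the injection $\mu_1$ and pushing it inward through $\mu_2$ via (i) transforms the left-hand side into the associator of $\mu_2$ on three $D$-arguments, with $\mu_1(a) \in D$ occupying the slot originally held by $a \in I$. Axiom (iii) identifies this associator with $\mu_1 \mu_3$ applied to the same arguments, and injectivity of $\mu_1$ concludes. For instance, in (iv) one obtains
\[
\mu_1\bigl(\mu_2(\mu_2(a,x),y) - \mu_2(a,\mu_2(x,y))\bigr) = \mu_2(\mu_2(\mu_1(a),x),y) - \mu_2(\mu_1(a),\mu_2(x,y)) = \mu_1\mu_3(\mu_1(a),x,y),
\]
from which (iv) follows. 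The arguments for (v) and (vi) are symmetric.

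The pentagon (vii) is the only axiom with genuine content, but it too is a formal identity. Indeed, for the associator $A(u,v,w) := \mu_2(\mu_2(u,v),w) - \mu_2(u,\mu_2(v,w))$ of \emph{any} bilinear operation $\mu_2$, a direct expansion (in which the two copies of $\mu_2(\mu_2(x,y),\mu_2(z,w))$ cancel) yields
\[
A(\mu_2(x,y),z,w) - A(x,\mu_2(y,z),w) + A(x,y,\mu_2(z,w)) = \mu_2(A(x,y,z), w) + \mu_2(x, A(y,z,w)).
\]
Applying $\mu_1$ to (vii) and using (i) to move $\mu_1$ past $\mu_2$ on the right, the equation reduces precisely to this formal identity with $A = \mu_1\mu_3$. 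The main obstacle, and essentially the only bookkeeping required throughout, is to ensure that the extended $\mu_2$ on $\mathcal{D} \otimes \mathcal{D}$ agrees with the underlying bilinear operation on $D \otimes D$ under the inclusion; this is automatic since $\mu_2$ on $\mathcal{D}$ is defined by restricting the same formula on $D$, and the containments $\mu_2(I,D), \mu_2(D,I), \mu_3(D,D,D) \subseteq I$ have already been established.
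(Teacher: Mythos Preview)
Your proof is correct. For axioms (i)--(vi) your treatment matches the paper's, which simply declares them ``straightforward.'' The genuine divergence is in (vii): the paper verifies it by a direct eight-term computation, expanding each of the five terms via the explicit formula \eqref{mu3-reduced} and then simplifying using the dialgebra identities \eqref{ax1}--\eqref{ax5}. Your argument instead exploits that $\mu_3$ is \emph{defined} as the associator of $\mu_2$ and that the five-term pentagon relation
\[
A(\mu_2(x,y),z,w) - A(x,\mu_2(y,z),w) + A(x,y,\mu_2(z,w)) = \mu_2(A(x,y,z),w) + \mu_2(x,A(y,z,w))
\]
is a formal identity valid for the associator of \emph{any} bilinear operation, with injectivity of $\mu_1$ transporting it back to $I$. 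This is more conceptual and shows that the dialgebra axioms play no role in (vii) beyond guaranteeing that $\mu_3$ lands in $I$; the paper's computation, by contrast, obscures this by invoking \eqref{ax1}--\eqref{ax5} throughout. Your route also makes transparent that the entire associative $2$-algebra structure is forced once one knows $\mu_1$ is injective and $\mu_3$ is the $\mu_2$-associator.
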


\begin{proof}
It is straightforward to check that the identities $(i)-(vi)$ in the definition of an associative $2$-algebra are satisfied for $(\mathcal{D}=D\oplus I,\mu_1,\mu_2,\mu_3)$. We verify the identity $(vii)$, i.e. 
$$\mu_3(\mu_2(x,y),z,w)-\mu_3(x,\mu_2(y,z),w)+\mu_3(x,y,\mu_2(z,w))=\mu_2(\mu_3(x,y,z),w)+\mu_2(x,\mu_3(y,z,w)).$$

To show that the identity holds, let us rewrite the first term of the Left hand side using equations \eqref{mu2} and \eqref{mu3-reduced} 
\begin{align*}
\mu_3(\mu_2(x,y),z,w)=&~~\frac{1}{4}\big((\mu_2(x,y)\dashv z)\vdash w -\mu_2(x,y)\dashv (z\vdash w)\big)\\
=&~~\frac{1}{8}\big(((x\vdash y)\dashv z)\vdash w+((x\dashv y)\dashv z)\vdash w-(x\vdash y)\dashv (z\vdash w)-(x\dashv y)\dashv (z\vdash w)\big)
\end{align*}

Similarly, second and third terms of the left hand side can be written as 
\begin{align*}
\mu_3(x,\mu_2(y,z),w)
=&~~\frac{1}{8}\big((x\dashv (y\vdash z))\vdash w+(x\dashv (y\dashv z))\vdash w-x\dashv ((y\vdash z)\vdash w)-x\dashv ((y\dashv z)\vdash w)\big)\\
\mu_3(x,y,\mu_2(z,w))
=&~~\frac{1}{8}\big((x\dashv y)\vdash (z\vdash w)+(x\dashv y)\vdash (z\dashv w)-x\dashv (y\vdash (z\vdash w))-x\dashv (y\vdash (z\dashv w))\big)
\end{align*}
Simplifying the left hand side, one obtains the following expression
\begin{align*}
&\mu_3(\mu_2(x,y),z,w)-\mu_3(x,\mu_2(y,z),w)+\mu_3(x,y,\mu_2(z,w))\\
=&~~\frac{1}{8}\big(((x\vdash y)\dashv z)\vdash w-(x\vdash y)\dashv (z\vdash w)+(x\dashv y)\vdash (z\dashv w)-x\dashv (y\vdash (z\dashv w))\big)\\
=&~~\frac{1}{8}\big(x\vdash ((y\dashv z)\vdash w)-x\vdash (y\dashv (z\vdash w))+((x\dashv y)\vdash z)\dashv w-(x\dashv (y\vdash z))\dashv w\big)\\
=&~~   \mu_2(x,\mu_3(y,z,w))+\mu_2(\mu_3(x,y,z),w).
\end{align*}
Hence, it follows that $(\mathcal{D}=D\oplus I,\mu_1,\mu_2,\mu_3)$ is an associative $2$-algebra.
\end{proof}

\section{Poisson dialgebras}\label{sec-4}

The notion of Poisson dialgebras was first introduced by Loday in \cite{loday} as a generalization of Poisson algebras \cite{Akman, YKS, Kubo}. We observe that the following definition is more appropriate one with several supporting examples.

\begin{Def}\label{Pois-di}
A Poisson dialgebra $\mathcal{P}$ is a $\mathbb{K}$-vector space with dialgebra products $\dashv$ and $\vdash$, and a Leibniz bracket $[-,-]$ such that the following compatibility relations hold.
\begin{equation}\label{comp-ax1}
[x, y \dashv z] = y\vdash [x, z]+[x,y]\dashv z=[x,y\vdash z],
\end{equation}
\vspace{-.8cm}
\begin{equation}\label{comp-ax2}
[x\dashv y, z] = x\dashv [y, z]+[x,z]\dashv y,\end{equation}
\vspace{-.8cm}
\begin{equation}\label{comp-ax3}
[x\vdash y, z] = x\vdash  [y, z]+[x,z]\vdash  y,
\end{equation}
\vspace{-.8cm}
\begin{equation}\label{comp-ax4}
[x , y]\vdash z = -[y,x]\vdash z,
\end{equation}
\vspace{-.8cm}
\begin{equation}\label{comp-ax5}
x\dashv [y,z]= -x\dashv [z,y],
\end{equation}
for all $x,y,z\in \mathcal{P}$. 
\end{Def}
\noindent We call the last two identities as mixed skew-symmetry of the Leibniz bracket $[-,-]$.

Let $\mathcal{P}$ and $\mathcal{P}^\prime$ be two Poisson dialgebras, then a map $\varphi:\mathcal{P}\rightarrow \mathcal{P}^\prime$ is a homomorphism of Poisson dialgebras if $\varphi$ preserves both the dialgebra products and the Leibniz bracket. We denote the category of Poisson dialgebras by $\mathsf{Pois-di}$.

Note that any Poisson algebra is also a Poisson dialgebra. Thus, we have a functor $\mathsf{inc}:\mathsf{Pois}\rightarrow \mathsf{Pois-di}$ considering a Poisson algebra as a Poisson dialgebra. 

\begin{Exm}\label{1}
Let $(D,\dashv,\vdash)$ be a dialgebra. Then by Proposition \ref{induced-Leibniz}, we have a Leibniz bracket on $D$ defined by
$$[x,y]=x\dashv y-y\vdash x,\quad  \forall x,y\in D.$$
It follows from identities \eqref{ax6}-\eqref{ax8} that the bracket and the dialgebra products satisy the compatibility relations given by the equations \eqref{comp-ax1}-\eqref{comp-ax5}. Thus, we have a Poisson dialgebra structure on any dialgebra.
\end{Exm} 
 
\begin{Exm}\label{Exm2}
Let $P$ be a Poisson algebra and $M$ be a Poisson-bimodule over $P$. If $f: M\rightarrow P$ is a Poisson-bimodule map, then $(M,\dashv,\vdash,[-,-]_\mathsf{M})$ is a Poisson dialgebra, where  
$$m\dashv n=m\cdot f(n),\quad  m\vdash n=f(m)\cdot n,\quad\mbox{and}\quad[m,n]_\mathsf{M}=[m,f(n)], ~~~ \forall m,n\in M.$$
We need to show that compatibility conditions \eqref{comp-ax1}-\eqref{comp-ax5} hold. First, let us consider the following expression.
\begin{align*}
[m, n \dashv p]_\mathsf{M} &=[m,f(n\cdot f(p))]\\
&=[m,f(n)f(p)]\quad\quad\quad\mbox{(since}~ f~ \mbox{is a Poisson-bimodule map)}\\
&=f(n)[m,f(p)]+[m,f(n)]f(p)\quad\mbox{(since}~ M~ \mbox{is a Poisson-bimodule) }\\
&=n\vdash [m,p]_\mathsf{M}+[m,n]_\mathsf{M}\dashv p.
\end{align*}
Since $f:M\rightarrow P$ is a Poisson-bimodule map, we have
$$[m, n \vdash p]_\mathsf{M}=[m, f(f(n)\cdot p)] =n\vdash [m,p]_\mathsf{M}+[m,n]_\mathsf{M}\dashv p=[m, n \dashv p]_\mathsf{M}.$$
Similarly, one can show that
\begin{equation*}
[m\dashv n, p]_\mathsf{M} = m\dashv [n, p]_\mathsf{M}+[m,p]_\mathsf{M}\dashv n,
\end{equation*}
\begin{equation*}
[m\vdash n, p]_\mathsf{M} = m\vdash [n, p]_\mathsf{M}+[m,p]_\mathsf{M}\vdash n,\quad \forall m,n,p\in M.
\end{equation*}
Also, we obtain the following expression by using the fact that $f:M\rightarrow P$ is a Poisson-bimodule map    
\begin{align*}
[m,n]_\mathsf{M}\vdash p = f[m,f(n)]\cdot p = [f(m),f(n)]\cdot p = -[f(n),f(m)]\cdot p= -f[n,f(m)]\cdot p= -[n,m]_{\mathsf{M}}\vdash p.
\end{align*}
Similarly, $$m\dashv [n,p]_{\mathsf{M}}=-m\dashv [p,n]_{\mathsf{M}}, \quad  \forall m,n,p\in P.$$
Thus, we have a Poisson dialgebra structure $(M,\dashv, \vdash,[-,-]_\mathsf{M})$ on $M$.
\end{Exm} 

\begin{Exm}
Let $P$ be a Poisson algebra and $d:P\rightarrow P$ be a linear map. Suppose that $(P,d)$ is a differential Poisson algebra, i.e., 
\begin{align*}
d(xy)&=x(dy)+ (dx) y,\\
d([x,y])&=[dx,y]+[x,dy], \quad \forall x,y\in P
\end{align*}
 and $d^2=0$. Then $\mathcal{P}_d:=(P,\dashv,\vdash,[-,-]_d)$ is a Poisson dialgebra, where the dialgebra products $\dashv, \vdash$, and the Leibniz bracket $[-,-]_d$ are given by 
$$x\dashv y=x d(y),\quad  x\vdash y=d(x) y,\quad\mbox{and} \quad [x,y]_d=[x,dy],\quad\forall x,y\in P.$$
\end{Exm}

\subsection{Averaging operators on Poisson algebras}
Let us recall the notion of averaging operators from \cite{Rota1,Rota2}. Let $P$ be a Poisson algebra with the associative algebra product $\mu$ and the Lie bracket $[-,-]$. A linear map $\alpha: P\rightarrow P $ is called an averaging operator if 
$$\alpha(x\alpha(y))=\alpha(x)\alpha(y)=\alpha(\alpha(x)y)~~\mbox{and}$$
$$[\alpha(x),\alpha(y)]=\alpha([\alpha(x),y]),\quad  \forall x,y\in P.$$
Thus, the map $\alpha$ is an averaging operator on the associative algebra $(P,\mu)$ as well as on the Lie algebra $(P,[-,-])$. A differential $d:P\rightarrow P$ of a Poisson algebra $P$ is a particular case of an averaging operator on $P$.

 In the next result, we construct a Poisson dialgebra from a Poisson algebra endowed with an averaging operator.

\begin{Prop}
Let $P$ be a Poisson algebra with the associative algebra product $\mu$, Lie bracket $[-,-]$, and an averaging operator $\alpha:P\rightarrow P$. Then, $\mathcal{P}=(P,\dashv,\vdash,[-,-]_{\mathcal{P}})$ is a Poisson dialgebra, where 
$$x\dashv y:=x\alpha(y),\quad x\vdash y:=\alpha(x)y,\quad\mbox{and}\quad[x,y]_{\mathcal{P}}=[x,\alpha(y)],~\forall x,y\in P.$$
\end{Prop}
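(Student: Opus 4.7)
The plan is a direct verification, broken into three stages: first show $(P,\dashv,\vdash)$ is a dialgebra, then show $(P,[-,-]_{\mathcal{P}})$ is a Leibniz algebra, and finally check the five compatibility relations. Throughout, the crucial identity I will want to have on hand is the symmetric form of the averaging condition on the Lie bracket, namely $\alpha[x,\alpha(y)]=[\alpha(x),\alpha(y)]$. This is not literally what is assumed, but it follows from the hypothesis $[\alpha(x),\alpha(y)]=\alpha([\alpha(x),y])$ together with skew-symmetry of $[-,-]$: indeed $\alpha[x,\alpha(y)]=-\alpha[\alpha(y),x]=-[\alpha(y),\alpha(x)]=[\alpha(x),\alpha(y)]$. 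I would record this as a small lemma before entering the verification.

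For the dialgebra axioms, each of \eqref{ax1}--\eqref{ax5} reduces, after expanding the definitions of $\dashv$ and $\vdash$, to an equality of products of the form $x\alpha(y)\alpha(z)$ or $\alpha(x)\alpha(y)z$, and these follow from associativity of $\mu$ combined with the multiplicative averaging identities $\alpha(x\alpha(y))=\alpha(x)\alpha(y)=\alpha(\alpha(x)y)$. For the Leibniz identity of $[-,-]_{\mathcal{P}}$, the left-hand side $[[x,\alpha(y)],\alpha(z)]$ matches the right-hand side $[[x,\alpha(z)],\alpha(y)]+[x,\alpha[y,\alpha(z)]]$ once the inner $\alpha[y,\alpha(z)]$ is rewritten as $[\alpha(y),\alpha(z)]$ via the lemma above; then it is just the Jacobi identity for the Poisson bracket of $P$.

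For the compatibility relations, \eqref{comp-ax1} reduces to $[x,\alpha(y)\alpha(z)]=\alpha(y)[x,\alpha(z)]+[x,\alpha(y)]\alpha(z)$, which is the Poisson Leibniz rule \eqref{Leibniz rule} on $P$; the equality $[x,y\dashv z]_{\mathcal{P}}=[x,y\vdash z]_{\mathcal{P}}$ is immediate because both reduce to $[x,\alpha(y)\alpha(z)]$ after using $\alpha(y\alpha(z))=\alpha(y)\alpha(z)=\alpha(\alpha(y)z)$. The relations \eqref{comp-ax2} and \eqref{comp-ax3} follow by the same pattern: expand $[x\alpha(y),\alpha(z)]$ and $[\alpha(x)y,\alpha(z)]$ using the Poisson Leibniz rule, then rewrite the outer $\alpha$ applied to a bracket via the lemma. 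The skew-symmetry conditions \eqref{comp-ax4} and \eqref{comp-ax5} are the cleanest: both $[x,y]_{\mathcal{P}}\vdash z=\alpha[x,\alpha(y)]z$ and $x\dashv[y,z]_{\mathcal{P}}=x\alpha[y,\alpha(z)]$ become, after applying the lemma, $[\alpha(x),\alpha(y)]z$ and $x[\alpha(y),\alpha(z)]$ respectively, and these are manifestly skew under swapping the two inner arguments because $[-,-]$ is skew on $P$.

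The only step that is more than bookkeeping is recognizing and establishing the symmetric averaging identity $\alpha[x,\alpha(y)]=[\alpha(x),\alpha(y)]$, and I expect this to be the one place a careless reader might get stuck; once it is available, every compatibility relation is either an instance of the Poisson Leibniz rule on $P$ or an immediate consequence of the skew-symmetry of $[-,-]$ on $P$.
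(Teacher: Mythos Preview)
Your proposal is correct and follows essentially the same approach as the paper's proof: both verify the dialgebra and Leibniz structures directly, then check the compatibility relations \eqref{comp-ax1}--\eqref{comp-ax5} by expanding via the averaging identities and reducing to the Poisson Leibniz rule or skew-symmetry on $P$. Your explicit isolation of the identity $\alpha[x,\alpha(y)]=[\alpha(x),\alpha(y)]$ as a preliminary lemma is a nice organizational touch---the paper uses exactly this step inline in its verification of \eqref{comp-ax4} (writing $\alpha[x,\alpha(y)]z=[\alpha(x),\alpha(y)]z$ without comment)---but the substance is identical.
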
 

\begin{proof}
It easily follows that $P$ is a dialgebra with the products $\dashv,\vdash$ and $(P,[-,-]_\mathcal{P})$ is a right Leibniz algebra. We now show that the comaptibility relations \eqref{comp-ax1}-\eqref{comp-ax3} hold. For any $x,y,z\in P$, we have 
\begin{align*}
[x, y \dashv z]_\mathcal{P}&=[x,\alpha(y\alpha(z))]\\
&=[x,\alpha(y)\alpha(z)]\\
&=\alpha(y)[x,\alpha(z)]+[x,\alpha(y)]\alpha(z)\\
&=y\vdash [x,z]_\mathcal{P}+[x,y]_{\mathcal{P}}\dashv z.
\end{align*}
Moreover, $\alpha(y\alpha(z))=\alpha(\alpha(y)z)$ implies that
$$[x, y \dashv z]_\mathcal{P}=y\vdash [x,z]_\mathcal{P}+[x,y]_{\mathcal{P}}\dashv z=[x, y \vdash z]_\mathcal{P}.$$
Similarly, 
\begin{equation*}
[x\dashv y, z]_\mathcal{P} = x\dashv [y, z]_\mathcal{P}+[x,z]_\mathcal{P}\dashv y,
\end{equation*}
\begin{equation*}
[x\vdash y, z]_\mathcal{P} = x\vdash  [y, z]_\mathcal{P}+[x,z]_\mathcal{P}\vdash  y,\quad\forall x,y,z\in P.
\end{equation*}
Next, we consider the following expression
\begin{align*}
[x,y]_{\mathcal{P}}\vdash z=\alpha[x,\alpha(y)]z=[\alpha(x),\alpha(y)]z=-[\alpha(y),\alpha(x)]z=-\alpha[y,\alpha(x)]z=-[y,x]_{\mathcal{P}}\vdash z.
\end{align*}
A similar calculation shows that  
$$x\dashv [y,z]_{\mathcal{P}}=-x\dashv [z,y]_{\mathcal{P}},\quad\forall x,y,z\in P.$$
Hence, $\mathcal{P}=(P,\dashv,\vdash,[-,-]_{\mathcal{P}})$ is a Poisson dialgebra.
\end{proof}

\begin{Def}\label{Poisson-di bimodule} 
Let $\mathcal{P}$ be a Poisson dialgebra. Then a Poisson dialgebra bimodule is a $\mathbb{K}$-vector space $M$ such that it is simultaneously a bimodule over the dialgebra $(\mathcal{P},\dashv, \vdash)$ and a module over the Leibniz algebra $(\mathcal{P},[-,-])$ such that the identities \eqref{comp-ax1}-\eqref{comp-ax3} hold whenever two of the entries are in $\mathcal{P}$ and one entry is in  $M$.
\end{Def}

\begin{Def}
A graded Poisson dialgebra $\mathcal{P}$ of degree $n$ is a $\mathbb{K}$-graded vector space with degree $0$ dialgebra products $\dashv$ and $\vdash$, and a Leibniz bracket $[-,-]$ of degree $-n$ such that 
\begin{equation}\label{graded-comp-ax1}
[x, y \dashv z] = (-1)^{|y|(|x|-n)}y\vdash [x, z]+[x,y]\dashv z=[x,y\vdash z],
\end{equation}
\vspace{-.6cm}
\begin{equation}\label{graded-comp-ax2}
[x\dashv y, z] = x\dashv [y, z]+(-1)^{|y|(|z|-n)}[x,z]\dashv y,\end{equation}
\vspace{-.6cm}
\begin{equation}\label{graded-comp-ax3}
[x\vdash y, z] = x\vdash  [y, z]+(-1)^{|y|(|x|-n)}[x,z]\vdash  y,
\end{equation}
\vspace{-.6cm}
\begin{equation}\label{graded-comp-ax4}
[x , y]\vdash z = -(-1)^{(|x|-n)(|y|-n)}[y,x]\vdash z,
\end{equation}
\vspace{-.6cm}
\begin{equation}\label{graded-comp-ax5}
x\dashv [y,z]= -(-1)^{(|y|-n)(|z|-n)}x\dashv [z,y],
\end{equation}
where $x,y,z\in \mathcal{P}$ and $|r|$ denotes degree of an element $r\in \mathcal{P}$. We call the graded Poisson dialgebra of degree $0$ the Poisson dialgebra structure on a graded vector space, and call the graded Poisson dialgebra of degree $1$ the Gerstenhaber dialgebras.  
\end{Def}


\subsection{Poisson dialgebra associated to a Filtered dialgebra}

A filtration of a dialgebra $(D,\dashv, \vdash)$ is an increasing sequence of subspaces $D_0\subseteq D_1\subseteq D_2\subseteq\ldots$ such that 
$$D=\cup_{n\geq 0} D_n\quad \mbox{and }\quad D_i\dashv D_j,~D_i\vdash D_j\subseteq D_{i+j}, \mbox{for all } i,j\geq 0.$$

There is a graded vector space $\mathrm{Gr}(D)=\oplus_{i\geq 0} \mathrm{Gr}(D)_i$ associated to the filtration of the dialgebra $D$, where $\mathrm{Gr}(D)_0=D_0$ and $\mathrm{Gr}(D)_i=D_{i}/D_{i-1}$ for any $i\geq 1$.
Moreover, this graded vector space $\mathrm{Gr}(D)$ is a graded dialgebra, where the dialgebra products (of degree $0$) are given by 
\begin{align*}
(x+D_{i-1})\dashv (y+D_{j-1})&=x\dashv y+D_{i+j-1},\\
(x+D_{i-1})\vdash (y+D_{j-1})&=x\vdash y+D_{i+j-1},
\end{align*}
for all $x\in D_i$ and $y\in D_j$. 

Next, let us define a degree $0$ bracket on $\mathrm{Gr}(D)$ as follows
\begin{equation}\label{graded Loday Bracket}
[x+ D_{i-1}, y+ D_{j-1}]=x\dashv y - (-1)^{ij}y\vdash x+D_{i+j-1}, \forall x\in D_i, ~y\in D_j.
\end{equation}

We claim that the above bracket is a graded Leibniz bracket of degree $0$. To prove our claim, we consider the following expressions.
\begin{align}\label{LHS}
[[x+D_{i-1},y+D_{j-1}],z+D_{k-1}]
=&[x\dashv y-(-1)^{ij} y\vdash x+D_{i+j-1},z+D_{k-1}]\\\nonumber
=&(x\dashv y-(-1)^{ij} y\vdash x)\dashv z -(-1)^{k(i+j)}z\vdash(x\dashv y-(-1)^{ij} y\vdash x).
\end{align}
\begin{align}\label{RHS-I}
&(-1)^{jk}[[x+D_{i-1},z+D_{k-1}],y+D_{j-1}]\\\nonumber
=&(-1)^{jk}[x\dashv z-(-1)^{ik} z\vdash x+D_{i+k-1},y+D_{j-1}]\\\nonumber
=&(-1)^{jk}\{(x\dashv z-(-1)^{ik} z\vdash x)\dashv y -(-1)^{j(i+k)}y\vdash(x\dashv z-(-1)^{ik} z\vdash x)\}.
\end{align}
\begin{align}\label{RHS-II}
[x+D_{i-1},[y+D_{j-1},z+D_{k-1}]]
=&[x+D_{i-1},y\dashv z-(-1)^{jk} z\vdash y+D_{j+k-1}]\\\nonumber
=&x\dashv (y\dashv z-(-1)^{jk} z\vdash y) -(-1)^{i(j+k)}(y\dashv z-(-1)^{jk} z\vdash y)\vdash x.
\end{align}

From expressions \eqref{LHS}-\eqref{RHS-II} and the identities \eqref{ax1}-\eqref{ax5} it follows that
$$[[x+D_{i-1},y+D_{j-1}],z+D_{k-1}]=(-1)^{jk}[[x+D_{i-1},z+D_{k-1}],y+D_{j-1}]+[x+D_{i-1},[y+D_{j-1},z+D_{k-1}]].$$

Hence, the bracket defined by the equation \eqref{graded Loday Bracket} is a graded Leibniz bracket of degree $0$. Let us denote $\bar{x}_i:=x+D_{i-1},\bar{y}_j:=y+D_{j-1},$ and $\bar{z}_k:=z+D_{k-1}$. Then, we can verify the following identities
\begin{equation}\label{quot-graded-comp-ax1}
[\bar{x}_i, \bar{y}_j \dashv \bar{z}_k] = (-1)^{ij}\bar{y}_j\vdash [\bar{x}_i, \bar{z}_k]+[\bar{x}_i,\bar{y}_j]\dashv \bar{z}_k=[\bar{x}_i,\bar{y}_j\vdash \bar{z}_k],
\end{equation} 
\vspace{-.6cm}
\begin{equation}\label{quot-graded-comp-ax2}
[\bar{x}_i\dashv \bar{y}_j, \bar{z}_k] = \bar{x}_i\dashv [\bar{y}_j, \bar{z}_k]+(-1)^{jk}[\bar{x}_i,\bar{z}_k]\dashv \bar{y}_j,
\end{equation}
\vspace{-.6cm}
\begin{equation}\label{quot-graded-comp-ax3}
[\bar{x}_i\vdash \bar{y}_j, \bar{z}_k] = \bar{x}_i\vdash [\bar{y}_j, \bar{z}_k]+(-1)^{jk}[\bar{x}_i,\bar{z}_k]\vdash \bar{y}_j,
\end{equation}
\vspace{-.6cm}
\begin{equation}\label{quot-graded-comp-ax4}
[\bar{x}_i , \bar{y}_j]\vdash \bar{z}_k = -(-1)^{ij}[\bar{y}_j,\bar{x}_i]\vdash \bar{z}_k,
\end{equation}
\vspace{-.6cm}
\begin{equation}\label{quot-graded-comp-ax5}
\bar{x}_i\dashv [\bar{y}_j,\bar{z}_k]= -(-1)^{jk}\bar{x}_i\dashv [\bar{z}_k,\bar{y}_j].
\end{equation}

Note that the above identities \eqref{quot-graded-comp-ax1}-\eqref{quot-graded-comp-ax5} follows from the fact that any dialgebra carries a Poisson dialgebra structure where the Leibniz bracket is given by the Proposition \ref{induced-Leibniz} (cf. Example \ref{1}). 

\subsection{Gerstenhaber dialgebra associated to a Filtered dialgebra}
Let $(D,\dashv, \vdash)$ be a filtered dialgebra with the filtration $D=\cup_{n\geq 0} D_n$ and $D_0\subseteq D_1\subseteq D_2\subseteq\ldots$ (increasing sequence of subspaces). Let the associated dialgebra structure on the graded vector space $\mathrm{Gr}(D)$ is commutative, i.e.,
$$(x+D_{i-1})\dashv(y+D_{j-1})=(y+D_{j-1})\vdash(x+D_{i-1}),\quad\forall x\in D_i, y\in D_j.$$  
The commuativity condition implies that $x\dashv y-y\vdash x\in D_{i+j-1}$. Thus, the expression 
$$[x+ D_{i-1}, y+ D_{j-1}]=x\dashv y - (-1)^{(i-1)(j-1)}y\vdash x+D_{i+j-2}, \quad\forall x\in D_i, ~y\in D_j$$
defines a degree $-1$ bracket on the graded vector space $\mathrm{Gr}(D)$. A similar calculation as in the previous subsection shows that the above bracket is a degree $-1$ Leibniz bracket on $\mathrm{Gr}(D)$. Furthermore, $\mathrm{Gr}(D)$ is a graded Poisson dialgebra of degree $1$.  

\subsection{Poisson algebra associated to a Poisson dialgebra}

Let $\mathcal{P}$ be a Poisson dialgebra with dialgebra products $\dashv, \vdash$ and Leibniz bracket $[-,-]$. We consider the quotient space of $\mathcal{P}$ by the subspace spanned by the elements of the form $x\dashv y-x\vdash y$ and $[x,y]-[y,x]$ for all $x,y\in \mathcal{P}$. We denote this quotient space by $\mathcal{P}_{\mathrm{Poiss}}$. The left and right dialgebra products are same in the quotient space $\mathcal{P}_{\mathrm{Poiss}}$ and the Leibniz bracket induces a Lie bracket on $\mathcal{P}_{\mathrm{Poiss}}$. Thus, the quotient space $\mathcal{P}_{\mathrm{Poiss}}$ becomes a Poisson algebra. 

This construction gives a functor $\mathrm{(-)_{Poiss}}: \mathsf{Pois-di}\rightarrow \mathsf{Pois}$. We call this functor the Poissonization functor. It is left adjoint to the functor $\mathsf{inc}:\mathsf{Pois}\rightarrow \mathsf{Pois-di}$.

\subsection{Poisson algebra objects in the category $\mathcal{LM}$}
\begin{Def}
A Poisson algebra object in the category $\mathcal{LM}$ is an object $M\xrightarrow{f} P$ with two morphisms
$$\bar{\mu}=(\mu^\prime,\mu):(M\xrightarrow{f} P)\otimes_{\mathcal{LM}}(M\xrightarrow{f} P)\rightarrow (M\xrightarrow{f} P)~~\mbox{and}$$
$$\bar{\nu}=(\nu^\prime,\nu):(M\xrightarrow{f} P)\otimes_{\mathcal{LM}}(M\xrightarrow{f} P)\rightarrow (M\xrightarrow{f} P)$$
such that  
\begin{equation}\label{Pois-1}
\bar{\mu}(\mathsf{1}\otimes_{\mathcal{LM}}\bar{\mu})=\bar{\mu}(\bar{\mu}\otimes_{\mathcal{LM}}\mathsf{1}),
\end{equation}
\begin{equation}\label{Pois-2}
\bar{\nu}(1~2)=-\bar{\nu},
\end{equation}
\begin{equation}\label{Pois-3}
\bar{\nu}(\mathsf{1}\otimes_{\mathcal{LM}}\bar{\nu})+
\bar{\nu}(\mathsf{1}\otimes_{\mathcal{LM}}\bar{\nu})(1~2~3)+\bar{\nu}(\mathsf{1}\otimes_{\mathcal{LM}}\bar{\nu})(1~3~2)=0, 
\end{equation}
\begin{equation}\label{Pois-4}
\bar{\nu}(\mathsf{1}\otimes_{\mathcal{LM}}\bar{\mu})=
\bar{\mu}(\mathsf{1}\otimes_{\mathcal{LM}}\bar{\nu})(1~2)+\bar{\mu}(\bar{\nu}\otimes_{\mathcal{LM}}\mathsf{1}).
\end{equation}
Here, $(1~2),~(1~2~3),~(1~3~2)$ are cycles in symmetric group $S_3$. 
\end{Def}
The first condition \eqref{Pois-1} implies that $M\xrightarrow{f} P $ is an associative algebra object in $\mathcal{LM}$. The identities \eqref{Pois-2} and \eqref{Pois-3} implies that $M\xrightarrow{f} P $ is a Lie algebra object in $\mathcal{LM}$. The last identity \eqref{Pois-4} is the compatibility condition between the two structures in $\mathcal{LM}$.

\begin{Prop}\label{Poisson bimod map-Pdi}
A Poisson algebra object $M\xrightarrow{f} P$ in the category $\mathcal{LM}$ is equivalent to a Poisson-bimodule map $f:M\rightarrow P$, where $P$ is a Poisson algebra and $M$ is a $P$-bimodule. 
\end{Prop}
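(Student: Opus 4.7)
The plan is to show that the four structural conditions \eqref{Pois-1}--\eqref{Pois-4} together with the morphism property of $\bar{\mu}$ and $\bar{\nu}$ in the category $\mathcal{LM}$ decompose cleanly into ``downstairs'' conditions (which impose a Poisson algebra structure on $P$) and ``upstairs'' conditions (which turn $M$ into a Poisson-bimodule over $P$), with the $\mathcal{LM}$-morphism constraint forcing $f$ to be a Poisson-bimodule map.

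First I would handle the associative part. Condition \eqref{Pois-1} has already been analysed in the earlier proposition: its downstairs component makes $(P,\mu)$ an associative algebra, its upstairs component gives $M$ the structure of a $P$-bimodule via $\mu^{\prime}$, and the fact that $\bar{\mu}=(\mu^{\prime},\mu)$ is an $\mathcal{LM}$-morphism yields $f(\mu^{\prime}(m,a))=\mu(f(m),a)$ and $f(\mu^{\prime}(a,m))=\mu(a,f(m))$, that is, $f$ is a bimodule map.

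Next I would carry out the analogous analysis for the Lie part. Expanding \eqref{Pois-2} and \eqref{Pois-3} downstairs gives antisymmetry and Jacobi for $\nu$ on $P$, so $(P,\nu)$ is a Lie algebra. The upstairs components of the same identities, read through the definition of $\otimes_{\mathcal{LM}}$, give exactly the axioms of a Lie-module of $P$ on $M$ via $\nu^{\prime}$; the $\mathcal{LM}$-morphism constraint on $\bar{\nu}=(\nu^{\prime},\nu)$ yields
\[
f(\nu^{\prime}(a,m))=\nu(a,f(m)),\qquad f(\nu^{\prime}(m,a))=\nu(f(m),a),
\]
so $f$ is also a Lie-module map. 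Finally I would expand the compatibility \eqref{Pois-4}. Its downstairs component is precisely the Leibniz rule \eqref{Leibniz rule} for $(P,\mu,\nu)$, promoting $P$ to a Poisson algebra. Its upstairs component unpacks into the three instances of \eqref{Leibniz rule} in which exactly two arguments lie in $P$ and one lies in $M$; this is the definition of a Poisson-bimodule. Combined with the previous bimodule-map properties of $f$, this is exactly the statement that $f:M\to P$ is a Poisson-bimodule map.

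Conversely, given a Poisson-bimodule map $f:M\to P$, I would define $\mu^{\prime}$ and $\nu^{\prime}$ from the $P$-bimodule and $P$-Lie-module actions and set $\bar{\mu}=(\mu^{\prime},\mu)$, $\bar{\nu}=(\nu^{\prime},\nu)$. The bimodule-map property of $f$ is exactly what is needed for these pairs to be $\mathcal{LM}$-morphisms, and the Poisson-bimodule axioms translate back into \eqref{Pois-1}--\eqref{Pois-4} by the same decomposition. The main technical obstacle is the careful bookkeeping in \eqref{Pois-4}: the tensor product $\otimes_{\mathcal{LM}}$ splits a three-fold tensor into three summands (one $M$-slot, two $P$-slots), and one must verify that the three mixed Leibniz identities produced upstairs match precisely the Poisson-bimodule compatibilities obtained by inserting $M$ into each of the three positions of \eqref{Leibniz rule}; the remaining conditions are essentially a repetition of the argument used for associative algebra objects.
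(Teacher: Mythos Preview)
Your proposal is correct and follows essentially the same route as the paper: both arguments decompose conditions \eqref{Pois-1}--\eqref{Pois-4} into their downstairs components (yielding the Poisson algebra structure on $P$) and upstairs components (yielding the Poisson-bimodule structure on $M$), and then read the $\mathcal{LM}$-morphism property of $\bar{\mu}$ and $\bar{\nu}$ as the bimodule-map condition on $f$. Your version is slightly more explicit about the converse direction and the three-summand bookkeeping in \eqref{Pois-4}, but the underlying strategy is identical.
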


\begin{proof}
The downstairs of the above conditions \eqref{Pois-1}-\eqref{Pois-3} is equivalent to the fact that $P$ is an associative algebra with the product $\mu$ and a Lie algebra with the bracket $\nu$. The downstairs of the identity \eqref{Pois-4} is equivalent to the statement that the Lie bracket $\nu$ acts as a derivation of the associative product $\mu$. Thus, $P$ is a Poisson algebra.

The upstairs of the identity \eqref{Pois-1} implies that there is an $A$-bimodule structure on $M$ (given by the map $\mu^\prime$). Moreover, the upstairs of the identities \eqref{Pois-2}-\eqref{Pois-3} yields a Lie module structure on $M$ (via the map $\nu^\prime$). In the end, the upstairs of the equation \eqref{Pois-4} implies that the Poisson identity \eqref{Leibniz rule} holds whenever two of the entries are in $P$ and one of them is in $M$. So, $M$ is a Poisson-bimodule over $P$.

The map $\bar{\mu}$ and $\bar{\nu}$ are morphisms in $\mathcal{LM}$, which equivalently means that
$$f(\mu^\prime(m, x))=\mu(f(m), x),~\quad f(\mu^\prime(x, m))=\mu(x, f(m)),\quad \mbox{and}\quad f(\nu^\prime(x,m))=\nu(x,f(m)),\quad\forall m\in M, x\in P.$$

Hence, Poisson algebra object $M\xrightarrow{f} P$ in the category $\mathcal{LM}$ is equivalent to a Poisson-bimodule map $f:M\rightarrow P$.

\end{proof}

\begin{Thm}
For a Poisson algebra object $M\xrightarrow{f} P$ in $\mathcal{LM}$, we have a Poisson dialgebra structure on $M$. On the other hand, for any Poisson dialgebra $\mathcal{P}$ the projection map $\mathcal{P}\xrightarrow{p}\mathcal{P}_{\mathrm{Poiss}}$ is a Poisson algebra object in $\mathcal{LM}$. The two functors $\mathsf{Pois-di}\longleftrightarrow \mathsf{Pois}_{\mathcal{LM}}$ are adjoint to each-other. 

\end{Thm}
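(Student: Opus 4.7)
The plan is to mirror Theorem~\ref{As_LM}, replacing the associativization adjunction by the Poissonization adjunction $(-)_{\mathrm{Poiss}}\dashv\mathsf{inc}\colon\mathsf{Pois-di}\leftrightarrows\mathsf{Pois}$ and replacing the associative-algebra-object analysis by its Poisson counterpart, Proposition~\ref{Poisson bimod map-Pdi}. First I would fix the two functors: $\mathsf{F}\colon\mathsf{Pois}_{\mathcal{LM}}\to\mathsf{Pois-di}$ sends $M\xrightarrow{f}P$ to the Poisson dialgebra built on $M$ in Example~\ref{Exm2}, which is legitimate because by Proposition~\ref{Poisson bimod map-Pdi} the object datum \emph{is} a Poisson-bimodule map $f\colon M\to P$. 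The functor $\mathsf{G}\colon\mathsf{Pois-di}\to\mathsf{Pois}_{\mathcal{LM}}$ sends a Poisson dialgebra $\mathcal{P}$ to the projection $p\colon\mathcal{P}\to\mathcal{P}_{\mathrm{Poiss}}$, with $\mathcal{P}$ equipped with the $\mathcal{P}_{\mathrm{Poiss}}$-bimodule actions $\bar{x}\cdot m:=x\vdash m$, $m\cdot\bar{x}:=m\dashv x$ and the Lie-module action $[m,\bar{x}]:=[m,x]$.

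The crux is checking that $\mathsf{G}(\mathcal{P})$ really is a Poisson algebra object, i.e., that these actions descend to $\mathcal{P}_{\mathrm{Poiss}}$ and that $p$ is then a Poisson-bimodule map. Descent of the two associative actions through the relation $u\dashv v-u\vdash v$ is immediate from \eqref{ax6} and \eqref{ax8}; descent through the Lie-antisymmetrization relation $[u,v]+[v,u]$ is exactly the content of the mixed skew-symmetry axioms \eqref{comp-ax4}--\eqref{comp-ax5} added to Loday's definition. Descent of the Lie action through $u\dashv v-u\vdash v$ is precisely \eqref{comp-ax1}, and descent through $[u,v]+[v,u]$ follows from a single application of the right Leibniz identity:
\[
[m,[u,v]]+[m,[v,u]]=\bigl([[m,u],v]-[[m,v],u]\bigr)+\bigl([[m,v],u]-[[m,u],v]\bigr)=0.
\]
The Poisson-bimodule axioms for $\mathcal{P}$ over $\mathcal{P}_{\mathrm{Poiss}}$ are then the cases of \eqref{comp-ax1}--\eqref{comp-ax3} with exactly one entry played by a module element, and the compatibility of $p$ with all three operations is built into the definitions.

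For the adjoint bijection I would transcribe the diagrammatic argument at the end of the proof of Theorem~\ref{As_LM}: starting from $\phi'\in\mathrm{Hom}_{\mathsf{Pois-di}}(\mathcal{P},\mathsf{F}(M\xrightarrow{f}P))$, the composite $f\circ\phi'\colon\mathcal{P}\to\mathsf{inc}(P)$ is a Poisson dialgebra morphism landing in the image of $\mathsf{inc}$, so the Poissonization adjunction supplies a unique Poisson algebra homomorphism $\phi\colon\mathcal{P}_{\mathrm{Poiss}}\to P$ with $\phi\circ p=f\circ\phi'$, and $(\phi',\phi)$ is the desired element of $\mathrm{Hom}_{\mathsf{Pois}_{\mathcal{LM}}}(\mathsf{G}(\mathcal{P}),M\xrightarrow{f}P)$. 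The inverse assignment is projection onto the upstairs component; uniqueness of the downstairs component and functoriality in both variables are verified by the same diagram used to close Theorem~\ref{As_LM}.

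The step I expect to be the main obstacle is the middle one, and specifically the descent of the two associative actions through the antisymmetrization quotient. This is exactly where the mixed skew-symmetry axioms \eqref{comp-ax4}--\eqref{comp-ax5} earn their place in Definition~\ref{Pois-di}: without them one cannot even form the candidate $\mathcal{P}_{\mathrm{Poiss}}$-bimodule structure on $\mathcal{P}$, and the adjunction collapses. Once that obstruction is cleared, the remainder of the argument is essentially a transcription of the associative case.
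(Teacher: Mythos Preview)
Your proposal is correct and follows essentially the same route as the paper: both define $\mathsf{F}$ via Proposition~\ref{Poisson bimod map-Pdi} and Example~\ref{Exm2}, define $\mathsf{G}$ via Poissonization, and then establish the adjunction by transcribing the argument of Theorem~\ref{As_LM} with the Poissonization adjunction $(-)_{\mathrm{Poiss}}\dashv\mathsf{inc}$ in place of the associativization adjunction. Where you go beyond the paper is in explicitly verifying that $\mathsf{G}(\mathcal{P})=(\mathcal{P}\xrightarrow{p}\mathcal{P}_{\mathrm{Poiss}})$ is genuinely a Poisson algebra object in $\mathcal{LM}$---the paper asserts this in the theorem statement but does not check it in the proof---and your observation that the descent of the associative actions through the antisymmetrization relations is precisely where axioms \eqref{comp-ax4}--\eqref{comp-ax5} are needed is a useful clarification that the paper leaves implicit.
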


\begin{proof}
Let $M\xrightarrow{f} P$ be a Poisson algebra object in $\mathcal{LM}$. Proposition \ref{Poisson bimod map-Pdi} implies that the Poisson object $M\xrightarrow{f} P$ is equivalent to a Poisson-bimodule map $f:M\rightarrow P$, where $P$ is a Poisson algebra and $M$ is a $P$-bimodule. By Example \ref{Exm2}, we have a Poisson dialgebra structure $(M,\dashv, \vdash,[-,-]_\mathsf{M})$ on $M$ with the dialgebra products
$$m\dashv n=m\cdot f(n)\quad \mbox{and}\quad m\vdash n=f(m)\cdot n, \quad \forall m,n\in M,$$
and the (right) Leibniz bracket given by 
$$[m,n]_\mathsf{M}=[m,f(n)],\quad\forall m,n\in M.$$
Here, the bracket $[-,-]$ on the right hand side is the Lie action of $P$ on $M$. We denote by $\mathsf{F}: \mathsf{Pois}_{\mathcal{LM}}\rightarrow \mathsf{Pois-di}$, the functor that associates the Poisson dialgebra $(M,\dashv, \vdash,[-,-]_\mathsf{M})$ to a Poisson object 
$M\xrightarrow{f} P$ in $\mathcal{LM}$. Also, denote $\mathsf{G}:\mathsf{Pois-di}\rightarrow \mathsf{Pois}_{\mathcal{LM}}$, the functor associating the Poisson object $\mathcal{P}\xrightarrow{p}\mathcal{P}_{\mathrm{Poiss}}$ to a Poisson dialgebra $\mathcal{P}$.

Next, we need to show that there is a bijection between the sets of homomorphisms
$$\mathrm{Hom}_{\mathsf{Pois}_{\mathcal{LM}}}(\mathcal{P}\xrightarrow{p} \mathcal{P}_{\mathrm{Poiss}},M\xrightarrow{f} P)\cong \mathrm{Hom}_\mathsf{Pois-di}(\mathcal{P},M).$$
Since the Poissonization functor $\mathrm{(-)_{Poiss}}: \mathsf{Pois-di}\rightarrow \mathsf{Pois}$ is left adjoint to the functor $\mathsf{inc}:\mathsf{Pois}\rightarrow \mathsf{Pois-di}$, the bijection follows from a similar argument as in the proof of Theorem \ref{As_LM}.

 Moreover, the above bijection is functorial in $\mathcal{P}\in \mathsf{Pois-di}$ and $M\xrightarrow{f}P\in \mathsf{Pois}_{\mathcal{LM}}$ since for any morphism $\alpha:\mathcal{P}\rightarrow \mathcal{P}^\prime$ in $\mathsf{Pois-di}$ and $\beta:(M\xrightarrow{f}P)\rightarrow (M^\prime\xrightarrow{f^\prime}P^\prime)$ in $\mathsf{Pois}_{\mathcal{LM}}$, we have the following commutative diagram:

$$\begin{CD}
\mathrm{Hom}_{\mathsf{Pois}_{\mathcal{LM}}}(\mathsf{G}(\mathcal{P}^{\prime}), M\xrightarrow{f}P) @> \mathsf{G}(\alpha)^* >>
\mathrm{Hom}_{\mathsf{Pois}_{\mathcal{LM}}}(\mathsf{G}(\mathcal{P}), M\xrightarrow{f}P) @> \beta_{*} >> \mathrm{Hom}_{\mathsf{Pois}_{\mathcal{LM}}}(\mathsf{G}(\mathcal{P}), M^{\prime}\xrightarrow{f^{\prime}}P^{\prime})\\
@V\cong VV @V\cong VV @V\cong VV\\
\mathrm{Hom}_{\mathsf{Pois-di}}(\mathcal{P}^{\prime},\mathsf{F} (M\xrightarrow{f}P)) @> \alpha^* >>
\mathrm{Hom}_{\mathsf{Pois-di}}(\mathcal{P}, \mathsf{F}(M\xrightarrow{f}P)) @> \mathsf{F}(\beta)_{*} >> \mathrm{Hom}_{\mathsf{Pois-di}}(\mathcal{P}, \mathsf{F}(M^{\prime}\xrightarrow{f^{\prime}}P^{\prime})),
\end{CD}$$
where $\alpha^*$ and $\beta_*$ are the maps induced by $\alpha$ and $\beta$. 

\end{proof}

\section{Homotopy algebra structures associated to Poisson dialgebras}\label{sec-5}

In Section $3$, we discussed that there is an associative $2$-algebra $(D\oplus I, \mu_1,\mu_2,\mu_3)$ associated to a dialgebra $D$. In this section, we show that there is a graded vector space associated to a Poisson dialgebra, which carries both a Lie $2$-algebra structure and an associative $2$-algebra structure. In the end, we consider the particular case when the left and right product of the Poisson dialgebra becomes identical, and we obtain an associated homotopy Poisson algebra of finite type. We first recall the definition of a Lie $2$-algebra.

\begin{Def}[\cite{Baez}]
A graded vector space $\mathcal{G}=\mathfrak{g}_{0}\oplus \mathfrak{g}_{1}$ is called a Lie $2$-algebra, if $\mathcal{G}$ is equipped with linear maps ${l_i:\wedge^i\mathcal{G}\rightarrow \mathcal{G}}$  of degrees $i-2$, for $i=1,2,3$ and the following identities hold.
\begin{enumerate}[(i)]
\item $l_1 l_2(x,a)=l_2(x,l_1(a))$,
\item $l_2(l_1(a),b)=l_2(a,l_1(b))$,
\item $l_2(x,l_2(y,z))+l_2(y,l_2(z,x))+l_2(z,l_2(x,y))=l_1l_3(x,y,z)$,
\item $l_2(x,l_2(y,a))+l_2(y,l_2(a,x))+l_2(a,l_2(x,y))=l_3(x,y,l_1(a))$,
\item $l_3(l_2(x,y),z,w)+l_3(l_2(y,z),w,x)+l_3(l_2(z,w),x,y)+l_3(l_2(w,x),y,z)=l_2(l_3(x,y,z),w)+l_2(l_3(y,z,w),x)+l_2(l_3(z,w,x),y)$,
\end{enumerate}
for all $a,b\in \mathfrak{g}_0$ and $x,y,z,w\in \mathfrak{g}_1$. 
\end{Def}

Let $\mathfrak{g}$ be a Leibniz algebra and the set 
$$Z(\mathfrak{g})=\{x\in \mathfrak{g}~|~[y,x]=0,\quad\forall y\in \mathfrak{g}\}$$
denotes the right center  of the Leibniz algebra $\mathfrak{g}$. We consider the graded vector space $\mathfrak{g}\oplus Z(\mathfrak{g})$, where $\mathfrak{g}$ is degree $0$ and the center is of degree $1$. 
Then let us recall the following result from \cite{Sheng}.

\begin{Prop}\label{Lie-2-algebra}
The tuple $(\mathfrak{g}\oplus Z(\mathfrak{g}), l_1, l_2, l_3)$ is a Lie $2$-algebra, where the map $l_1: Z(\mathfrak{g})\rightarrow \mathfrak{g}$ is the inclusion map and the maps $l_2$ and $l_3$ are given as follows:
\begin{align*}
l_2(x,y)&=\frac{1}{2}([x,y]-[y,x]), \quad\forall x,y\in \mathfrak{g},\\
l_3(x,y,z)&= \frac{1}{4}\big( [[z,y],x]+[[x,z],y]+[[y,x],z]\big), \quad\forall x,y,z\in \mathfrak{g}
\end{align*}
 \end{Prop}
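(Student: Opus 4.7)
My plan is to verify the five axioms of a Lie $2$-algebra by direct computation, relying throughout on the right Leibniz identity. First I would verify that the structure maps $l_2$ and $l_3$ respect the grading. For $l_2$, one must check $l_2(x, a) \in Z(\mathfrak{g})$ whenever $x \in \mathfrak{g}$ and $a \in Z(\mathfrak{g})$. Substituting into the Leibniz identity $[[y, x], a] = [[y, a], x] + [y, [x, a]]$ and using $a \in Z(\mathfrak{g})$ (so $[y, a] = 0$ and $[[y, x], a] = 0$) immediately gives $[y, [x, a]] = 0$, hence $[x, a] \in Z(\mathfrak{g})$; linearity extends this to $l_2(x, a)$. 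For $l_3$, a longer but mechanical Leibniz manipulation shows that $[w, [[z, y], x] + [[x, z], y] + [[y, x], z]]$ collapses to zero after rewriting each triple bracket as a quadruple bracket via $[a, [b, c]] = [[a, b], c] - [[a, c], b]$; the six iterated brackets that appear then cancel in pairs.

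Axioms (i) and (ii) will follow essentially from the definitions: the inclusion $l_1$ commutes with the bracket formula by linearity, and the right-center property forces the two expressions in (ii) to coincide (both reduce to the same bracket up to sign). The substantive computational work is in axiom (iii). The plan is to expand the left hand side using the definition of $l_2$, producing
\[
\sum_{\text{cyc}} l_2(x, l_2(y, z)) = \tfrac{1}{4} \sum_{\text{cyc}} \bigl( [x, [y, z]] - [x, [z, y]] - [[y, z], x] + [[z, y], x] \bigr),
\]
and then apply $[a, [b, c]] = [[a, b], c] - [[a, c], b]$ to convert each inner bracket of the form $[x, [y, z]]$ into a difference of triple brackets. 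Collecting the resulting twelve terms according to their cyclic pattern should match the expression against $l_1 l_3(x, y, z)$. Axiom (iv) will follow the same template, but is considerably simpler: since one argument lies in $Z(\mathfrak{g})$, several summands vanish at the outset and the remaining identity reduces to a single application of the Leibniz identity.

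The hardest step will be axiom (v), the coherence condition for $l_3$. Here each side involves $l_3$ evaluated on an $l_2$-combination together with other arguments, so substituting the formulas for both $l_2$ and $l_3$ expands everything into a large alternating sum of iterated brackets of the form $[[[a, b], c], d]$. To reduce this one must apply the Leibniz identity at several nesting levels and also exploit the fact that outputs of $l_3$ lie in $Z(\mathfrak{g})$ and are therefore annihilated whenever paired on the right. Careful bookkeeping of the cyclic permutations of $(x, y, z, w)$ and the induced signs is where I expect the computation to be most error-prone; a systematic symbolic approach, keyed by the symmetry type of each quadruple bracket, seems preferable to term-by-term manipulation.
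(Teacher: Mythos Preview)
The paper does not actually prove this proposition: it is stated as a result recalled from \cite{Sheng}, with no argument given. Your direct-verification plan is the natural approach and is essentially what the original reference carries out; the outline you give (checking that $l_2$ and $l_3$ land in $Z(\mathfrak g)$, then verifying axioms (i)--(v) by repeated use of the right Leibniz identity) is correct in spirit and would succeed.

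A couple of minor points to tighten. For the grading check on $l_2$, you argue that $[x,a]\in Z(\mathfrak g)$ when $a\in Z(\mathfrak g)$, but $l_2(x,a)=\tfrac12([x,a]-[a,x])$ also involves $[a,x]$; the same Leibniz manipulation handles that term, so just make it explicit. For axiom (ii), note that since $l_1$ is the inclusion and both $a,b\in Z(\mathfrak g)$, one has $[a,b]=[b,a]=0$ outright (each sits in the right slot of a bracket with the other), so both sides vanish; no ``up to sign'' subtlety arises. Your assessment of axiom (v) as the bookkeeping-heavy step is accurate, and the strategy of expanding into iterated brackets $[[[a,b],c],d]$ and reducing via Leibniz, while exploiting that $l_3$-outputs are right-central, is exactly how the verification goes through.
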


Let $\mathcal{P}:=(P,\dashv, \vdash,[-,-])$ be a Poisson dialgebra. We have seen that the set  
$$I=\{x\in P~|~x\vdash y=0=y\dashv x,\quad \forall y\in P\}$$
becomes an ideal in the underlying dialgebra structure $(P,\dashv, \vdash)$. Denote by $Z(P)$, the right center of the underlying Leibniz algebra structure $(P,[-,-])$ of $\mathcal{P}$. Say, $J$ be the intersection of the two ideals $I$ and  $Z(P)$. Then, we associate a graded vector space $P\oplus J$ associated to the Poisson dialgebra $\mathcal{P}$, where $P$ is degree $0$ and $J$ is degree $1$. With the above notations, we have the following theorem.

\begin{Thm}
For a Poisson dialgebra $\mathcal{P}:=(P,\dashv, \vdash,[-,-])$, the associated graded vector space $P\oplus J$ has the following homotopy algebra structures.
\begin{enumerate}
\item $(P\oplus J,\mu_1,\mu_2,\mu_3)$ becomes an associative $2$-algebra, where $\mu_1:J \rightarrow P$ is the inclusion map and the maps $\mu_2$, $\mu_3$ are given by equations \eqref{mu2} and \eqref{mu3-reduced}.

\item $(P\oplus J,l_1,l_2,l_3)$ is a Lie $2$-algebra, where $l_1:J \rightarrow P$ is the inclusion map and the maps $l_2$, $l_3$ are given as in Proposition \ref{Lie-2-algebra}.
\end{enumerate}

\end{Thm}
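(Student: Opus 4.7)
The plan is to deduce both assertions by realizing $P \oplus J$ as a sub-$2$-complex of larger graded spaces on which the required homotopy-algebra structures are already available. Since $J = I \cap Z(P)$, the complex $P \oplus J$ sits inside $P \oplus I$ (which carries an associative $2$-algebra by Proposition \ref{As-2-algebra}) and inside $P \oplus Z(P)$ (which carries a Lie $2$-algebra by Proposition \ref{Lie-2-algebra}). The $2$-algebra axioms are universally quantified in the inputs, so once I verify that the structure maps $\mu_2,\mu_3$ and $l_2,l_3$ restrict to operations on $P \oplus J$, the required axioms on $P \oplus J$ follow by pure restriction. Thus the whole content of the theorem is contained in the closure checks.

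For part (1), I must verify: (a) $\mu_2(x,y) \in Z(P)$ whenever $x$ or $y$ lies in $J$, and (b) $\mu_3(x,y,z) \in Z(P)$ for all $x,y,z \in P$. Both follow by expanding $[w,-]$ with the compatibility \eqref{comp-ax1}. For (a), the two resulting summands $x \vdash [w,y]$ and $[w,x] \dashv y$ each vanish: the first because $x \in I$ forces $x \vdash (-) = 0$, and the second because $x \in Z(P)$ forces $[w,x] = 0$ (the case $y \in J$ is symmetric). For (b), iterating \eqref{comp-ax1} twice on each of the two summands of $\mu_3(x,y,z) = \tfrac14\bigl((x\dashv y)\vdash z - x\dashv(y\vdash z)\bigr)$ produces six terms that pair off into three groups, vanishing respectively by \eqref{ax4}, by \eqref{ax3}, and by the combination of \eqref{ax2} with \eqref{ax6}.

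For part (2), I must verify: (c) $l_2(x,y) \in I$ when $x$ or $y$ lies in $J$, and (d) $l_3(x,y,z) \in I$ for all $x,y,z \in P$. Claim (c) is straightforward: centrality collapses $l_2(x,y)$ to a scalar multiple of $[x,y]$ or $[y,x]$, and the mixed skew-symmetry identities \eqref{comp-ax4} and \eqref{comp-ax5} then give $[x,y] \vdash w = 0$ and $w \dashv [x,y] = 0$ (and similarly for $[y,x]$). The main technical step, which I expect to be the hard part, is (d). The key preliminary observation is that $\sigma(a,b) := [a,b]+[b,a]$ lies in $I$ (by \eqref{comp-ax4} and \eqref{comp-ax5}), and applying \eqref{comp-ax3} yields the stronger fact $[\sigma(a,b),c] \vdash w = 0$; as a corollary one obtains the substitution rule $[l_2(a,b),c] \vdash w = [[a,b],c] \vdash w$. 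Now a single use of \eqref{comp-ax4} rewrites $4\, l_3(x,y,z)\vdash w$ as $-K \vdash w$, where $K := [x,[z,y]] + [y,[x,z]] + [z,[y,x]]$; the right Leibniz identity expresses $K$ as $2\bigl([l_2(x,z),y] - [l_2(x,y),z] + [l_2(z,y),x]\bigr)$, and the substitution rule together with another use of the $\sigma$-identity yields the self-referential equation $4\, l_3(x,y,z) \vdash w = -8\, l_3(x,y,z) \vdash w$, forcing $l_3(x,y,z) \vdash w = 0$. The symmetric argument using \eqref{comp-ax5} and \eqref{comp-ax2} in place of \eqref{comp-ax4} and \eqref{comp-ax3} gives $w \dashv l_3(x,y,z) = 0$, so $l_3(x,y,z) \in I \cap Z(P) = J$, completing the closure check.
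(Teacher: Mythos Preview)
Your proof is correct and follows the paper's strategy exactly: reduce both assertions to closure of $\mu_2,\mu_3,l_2,l_3$ on $P\oplus J$, then verify each closure check from the Poisson dialgebra axioms. For part (1) your argument is essentially identical to the paper's (the six terms of $[w,\mu_3(x,y,z)]$ cancel in pairs via \eqref{ax4}, \eqref{ax3}, and \eqref{ax1} just as you describe).

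For part (2) there are two minor differences in execution. For $l_2(c,x)\in I$ you use centrality to collapse $l_2$ to a single bracket and then invoke \eqref{comp-ax4}--\eqref{comp-ax5}; the paper instead uses \eqref{comp-ax2}--\eqref{comp-ax3} together with $c\in I$. Both are one-line computations. For $l_3(x,y,z)\in I$ the paper is more direct: after the same key lemma $[\sigma(a,b),c]\vdash w=0$ (which you correctly derive from \eqref{comp-ax3}), it applies this once to swap the inner bracket in $[[x,z],y]\vdash w$ and applies \eqref{comp-ax4} once to swap the outer bracket in $[[y,x],z]\vdash w$, obtaining $\bigl([[z,y],x]-[[z,x],y]-[z,[y,x]]\bigr)\vdash w$, which is identically zero by the right Leibniz identity. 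Your self-referential route $4\,l_3\vdash w=-8\,l_3\vdash w$ is valid in characteristic~$0$ but takes an extra pass through Leibniz and the substitution rule; on the other hand, your explicit isolation of the intermediate fact $[\sigma,c]\vdash w=0$ clarifies a step the paper leaves implicit under the heading ``mixed skew-symmetry.''
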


\begin{proof}

$(1).$ To prove the first statement, we only need to show that 
$$\mu_2(x,c), \mu_2(c,x)\in J, \quad \forall c\in J, x\in P$$ and $$\mu_3(x,y,z)\in J, \quad\forall x,y,z\in P.$$
By the definition of the map $\mu_2$, we have $$\mu_2(x,y)=\frac{1}{2}(x\dashv y+ x\vdash y),\quad \forall x,y\in P.$$ 
For any $c\in J=I\cap Z(P)$,   
$$[z,\mu_2(x,c)]=\frac{1}{2}[z,x\vdash c]=x\vdash[z, c]+[z,x]\dashv c=0,\quad \forall z,x\in P.$$
Similarly, $[z,\mu_2(c,x)]=0$ for all $z,x\in P$. Therefore, 
$$\mu_2(x,c), \mu_2(c,x)\in J, \quad \forall c\in J, x\in P$$
Next, for all $x,y,z,w\in P$, we have the following expression 
\begin{align*}
&[w, \mu_3(x,y,z)]\\
=&\frac{1}{4}[w,(x\dashv y)\vdash z-x\dashv (y\vdash z)]\\
=&\frac{1}{4}\Big((x\dashv y)\vdash[w,z]+[w,x\dashv y]\dashv z-[w,x]\dashv (y\vdash z)-x\vdash [w,y\vdash z]\Big)\\
=&\frac{1}{4}\Big((x\dashv y)\vdash[w,z]+([w,x]\dashv y)\dashv z+(x\vdash [w,y])\dashv z-[w,x]\dashv (y\vdash z)-x\vdash (y\vdash[w, z])-x\vdash ([w,y]\dashv z)\Big)\\
=&0.
\end{align*}
It implies that $\mu_3(x,y,z)\in Z(P)$ for all $x,y,z \in P$. By Proposition \ref{As-2-algebra}, we know that $\mu_3(x,y,z)\in I$ for all $x,y,z \in P$. Hence, 
$$\mu_3(x,y,z)\in J, \quad\forall x,y,z\in P.$$

\bigskip

\noindent $(2).$ Again, to prove the second statement, we need to  simply verify that 
$$l_2(x,c)\in J, \quad l_3(x,y,z)\in J\quad \forall c\in J,~~  x,y,z\in P.$$

For any $c\in J$, we have the following expression 
$$l_2(c,x)\vdash z=\frac{1}{2}[c,x]\vdash z=[c\vdash z,x]-c\vdash [z,x]=0$$
Also,
$$z \dashv l_2(c,x)=\frac{1}{2} z\dashv [c,x]=[z\dashv c,x]-[z,x]\dashv c=0$$
Thus, it is clear that $l_2(x,y)\in J$ whenever atleast one of the elements $x$ and $y$ is in $J$.

Now, by the definition of the map $l_3$ and mixed skew-symmetry of the bracket $[-,-]$, we get  
\begin{align*}
l_3(x,y,z)\vdash w&= \frac{1}{4}\big( [[z,y],x]+[[x,z],y]+[[y,x],z]\big)\vdash w\\
&= \frac{1}{4}\big( [[z,y],x]-[[z,x],y]-[z,[y,x]]\big)\vdash w \\
&=0,\quad\quad \forall x,y,z, w \in \mathfrak{g}.
\end{align*}
Similar calculation shows that $w\dashv l_3(x,y,z)=0$.

\end{proof}

\subsection{Homotopy Poisson algebra associated to a reduced Poisson dialgebra}
In this subsection, we consider the particular case of a Poisson dialgebra $\mathcal{P}:=(P,\dashv, \vdash, [-,-])$ when the left and right dialgebra products become identical, i.e., $$x\dashv y=x\vdash y,\quad  \forall x,y\in P.$$ 
We denote $\mu(x,y):=x\dashv y=x\vdash y$. In this case, we call $\mathcal{P}=(P,\mu,[-,-])$ a reduced Poisson dialgebra.

\begin{Rem}
Let $(P,\mu)$ is a unital associative algebra. With the mixed skew-symmetry property, it follows that the reduced Poisson dialgebra simply becomes a Poisson algebra. Therefore, we only consider non-unital associative algebras in this section.
\end{Rem}

There are two homotopy algebra structures on the graded space $P\oplus J$ for a Poisson dialgebra $\mathcal{P}$. It would be interesting to find a compatibility relation between them, which is subject to  further investigation. However, we observe that if the Poisson dialgebra is reduced, then we can obtain a homotopy Poisson algebra structure of finite type on $P\oplus J$. 
We first recall the definition of a homotopy Poisson algebra from \cite{HomotopyPoiss, Mehta}. These algebras are closely related to homotopy Poisson manifolds \cite{PoissManifold1} and higher Poisson manifolds \cite{HigherPoiss1, HigherPoiss2}.

\begin{Def}
A homotopy Poisson algebra (of degree $0$) is a graded associative algebra $\mathcal{A}$ with an $L_{\infty}$ algebra structure $\{l_k\}_{k\geq 1}$ on $\mathcal{A}$ such that the map $\phi: \mathcal{A}\rightarrow \mathcal{A}$ given by 
$$\phi(x)=l_k(x_1,\ldots, x_{k-1}, x), \quad \mbox{for }x_1,\ldots, x_{k-1},x \in \mathcal{A}$$
is a derivation of degree $\varepsilon:=2-k+\sum_{i=1}^{k-1}|x_i|$, which means that
$$l_k(x_1,\ldots, x_{k-1}, xy)=l_k(x_1,\ldots, x_{k-1}, x)y+(-1)^{\varepsilon.|x| }x l_k(x_1,\ldots, x_{k-1}, y), \quad \forall x,y\in \mathfrak{A}.$$ 
A homotopy Poisson algebra is called of finite type if there exists some $m$ such that for $k>m$, $l_k=0$. A $2$-term homotopy Poisson algebra is a homotopy Poisson algebra structure on a $2$-term graded associative algebra $\mathcal{A}=A_0\oplus A_1$.
\end{Def}

\begin{Prop}
For a reduced Poisson dialgebra $(P,\mu,[-,-])$, the tuple $(P\oplus J,\mu,l_1,l_2,l_3)$ becomes a $2$-term homotopy Poisson algebra (of degree $0$).
\end{Prop}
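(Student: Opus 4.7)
The plan is to verify, in turn, the three ingredients of a $2$-term homotopy Poisson algebra: a graded associative algebra structure, an $L_\infty$ structure, and the derivation compatibilities. The reduced hypothesis buys two crucial simplifications. First, $\mu = \dashv = \vdash$ is associative, so $\mu_3$ from Proposition \ref{As-2-algebra} vanishes identically and the associative $2$-algebra on $P \oplus J$ collapses to a strict graded associative algebra with product $\mu$. Second, in the reduced setting $I$ consists of two-sided annihilators of $P$, so $J \subseteq I$ acts trivially on $P$ via the graded multiplication; moreover, the reduced Leibniz bracket $[a, b] = ab - ba$ together with $c \in Z(P)$ forces $[c, -] = [-, c] = 0$ for all $c \in J$.

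The $L_\infty$ structure is already supplied by the Lie $2$-algebra $(P \oplus J, l_1, l_2, l_3)$ from the preceding theorem, so only the derivation compatibilities remain to be verified. The cases $k = 1$ and $k = 2$ should be routine. For $k = 1$, every instance of $l_1(xy)$ and its corresponding right-hand side either involves a product that vanishes in the graded algebra, or reduces to a product $xc$ with $c \in I$, which is zero in $P$. For $k = 2$, the identity $l_2(x, yz) = l_2(x, y) z + y\, l_2(x, z)$ on $P \otimes P$ follows directly from symmetrizing \eqref{comp-ax1} and \eqref{comp-ax2} in reduced form, while any input from $J$ kills $l_2$ by the central/annihilator property.

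The main obstacle is the $k = 3$ derivation identity on $P \otimes P \otimes P$. For $x_1, x_2, z, w \in P$ I would expand
\[
4\, l_3(x_1, x_2, zw) = [[zw, x_2], x_1] + [[x_1, zw], x_2] + [[x_2, x_1], zw]
\]
using \eqref{comp-ax1}--\eqref{comp-ax3} in reduced form. This should produce the two expected terms $4\, l_3(x_1, x_2, z)\, w + 4\, z\, l_3(x_1, x_2, w)$ together with exactly two cross terms of the shape
\[
([z, x_1] + [x_1, z])\,[w, x_2] + [z, x_2]\,([w, x_1] + [x_1, w]).
\]
This is precisely where the mixed skew-symmetry axioms \eqref{comp-ax4} and \eqref{comp-ax5} intervene: in reduced form they say $([a, b] + [b, a])\, c = 0$ and $a\,([b, c] + [c, b]) = 0$, annihilating both cross terms. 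The remaining computation then gives the desired derivation identity in $P$. Since moreover $l_3(x_1, x_2, z), l_3(x_1, x_2, w) \in I$, both sides in fact vanish in $P$, consistently with the triviality of the graded bimodule action of $P$ on $J$. The remaining cases (with any argument in $J$) are trivial, because $l_3$ vanishes whenever an argument lies in $J$ by the same central/annihilator reasoning used for $l_2$.
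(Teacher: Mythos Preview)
Your overall plan matches the paper's brief argument (graded associative structure, Lie $2$-algebra structure from the preceding theorem, then verify the derivation properties of $l_2$ and $l_3$), and your detailed $l_3$ computation on $P^{\otimes 4}$ using the reduced compatibility identities and mixed skew-symmetry is exactly the kind of calculation the paper hides behind the phrase ``a straightforward calculation.''

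However, there is a genuine error in your set-up. You write that ``the reduced Leibniz bracket $[a,b]=ab-ba$,'' but this is false: in a general (reduced) Poisson dialgebra the Leibniz bracket $[-,-]$ is an \emph{independent} piece of structure, not the commutator of $\mu$. The formula $[x,y]=x\dashv y - y\vdash x$ is only the special case of Example~\ref{1}. Consequently your deductions ``$[c,-]=[-,c]=0$ for $c\in J$'' and ``any input from $J$ kills $l_2$'' are unjustified: for $c\in J$ one only has $[-,c]=0$ (since $c\in Z(P)$), and $l_2(c,y)=\tfrac12[c,y]$ need not vanish.

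What \emph{is} true, and suffices, is that $[c,y]\in I$ for $c\in J$: by \eqref{comp-ax4}--\eqref{comp-ax5} in reduced form, $[c,y]\cdot z=-[y,c]\cdot z=0$ and $z\cdot[c,y]=-z\cdot[y,c]=0$. Since the graded multiplication $P\otimes J\to J$ and $J\otimes P\to J$ is zero (as $J\subseteq I$ is a two-sided annihilator in the reduced case), the right-hand side of the $l_2$-derivation identity with $x_1\in J$ vanishes; and the left-hand side $l_2(x_1,yz)=\tfrac12[x_1,yz]=\tfrac12\bigl(y[x_1,z]+[x_1,y]z\bigr)=0$ by the same reason. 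The $l_3$ cases with an argument in $J$ are genuinely trivial, but for degree reasons ($l_3$ has degree $+1$ and there is no degree-$2$ component), not because of the false bracket identity. With these corrections your argument goes through.
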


\begin{proof}
The product $\mu$ is given by $ \mu(x,y):=x\dashv y=x\vdash y$, then the graded vector space $P\oplus J$ becomes a graded associative algebra with the product $\mu$ concentrated in degree $0$. Moreover, the tuple $(P\oplus J,l_1,l_2,l_3)$ is a Lie $2$-algebra, where $l_1:J \rightarrow P$ is the inclusion map and the maps $l_2$, $l_3$ are given as in Proposition \ref{Lie-2-algebra}.

A straightforward calculation shows that for $x,y\in P\oplus J$, the map $z\mapsto l_2(x,z)$ is a derivation of degree $|x|$ and the map $z\mapsto l_3(x,y,z)$ is derivation of degree $|x|+|y|-1$. Hence, the tuple $(P\oplus J,\mu,l_1,l_2,l_3)$ becomes a $2$-term homotopy Poisson algebra (degree $0$). 
\end{proof}

\end{document}